\theoremstyle{plain}
\newtheorem{satz}{Theorem}[section]
\newtheorem{lem}[satz]{Lemma}
\newtheorem{prop}[satz]{Proposition}
\theoremstyle{definition}
\newtheorem{bem}[satz]{Remark}
\newcommand{\R}{\mathbb{R}}
\newcommand{\C}{\mathbb{C}}
\newcommand{\N}{\mathbb{N}}
\newcommand{\Z}{\mathbb{Z}}
\newcommand{\CP}{\mathbb{CP}}
\newcommand{\Id}{\operatorname{Id}}
\newcommand{\Ric}{\operatorname{Ric}}
\newcommand{\scal}{\operatorname{scal}}
\newcommand{\vol}{\operatorname{vol}}
\newcommand{\Vol}{\operatorname{Vol}}
\newcommand{\Sym}{\operatorname{Sym}}
\newcommand{\tr}{\operatorname{tr}}
\newcommand{\im}{\operatorname{im}}
\newcommand{\diag}{\operatorname{diag}}
\newcommand{\End}{\operatorname{End}}
\newcommand{\Aut}{\operatorname{Aut}}
\newcommand{\Hom}{\operatorname{Hom}}
\newcommand{\Ad}{\operatorname{Ad}}
\newcommand{\ad}{\operatorname{ad}}
\newcommand{\Cas}{\operatorname{Cas}}
\renewcommand{\S}{\mathscr{S}}
\newcommand{\Sl}{\mathfrak{S}}
\newcommand{\X}{\mathfrak{X}}
\newcommand{\U}{\operatorname{U}}
\newcommand{\SU}{\operatorname{SU}}
\newcommand{\su}{\mathfrak{su}}
\renewcommand{\u}{\mathfrak{u}}
\newcommand{\SO}{\operatorname{SO}}
\newcommand{\so}{\mathfrak{so}}
\newcommand{\Sp}{\operatorname{Sp}}
\newcommand{\spann}{\operatorname{span}}
\newcommand{\pr}{\operatorname{pr}}
\renewcommand{\i}{\mathrm{i}}
\renewcommand{\j}{\mathrm{j}}
\renewcommand{\H}{\mathcal{H}}
\newcommand{\m}{\mathfrak{m}}
\newcommand{\h}{\mathfrak{h}}
\newcommand{\g}{\mathfrak{g}}
\renewcommand{\k}{\mathfrak{k}}
\renewcommand{\t}{\mathfrak{t}}
\DeclareMathOperator*{\closedsum}{\overline{\bigoplus}}
\newcommand{\Einstein}{E}
\newcommand{\TT}{\S^2_{\mathrm{tt}}}
\newcommand{\hooklongrightarrow}{\lhook\joinrel\longrightarrow}
\newcommand{\Do}{\mathfrak{D}}
\newcommand{\CR}{\nabla^{\mathrm{red}}}
\newcommand{\Rcr}{R^{\mathrm{red}}}
\newcommand{\Lcr}{\Delta^{\mathrm{red}}}
\newcommand{\Eop}{\mathcal{E}}
\title{\rmfamily Coindex and rigidity of Einstein metrics on homogeneous Gray manifolds}
\author{Paul Schwahn*}
\date{\today}
\begin{document}

\maketitle
{\let\thefootnote\relax\footnotetext{*Institut für Geometrie und Topologie, Fachbereich Mathematik, Universität Stuttgart, Pfaffenwaldring 57, 70569 Stuttgart, Germany.}}

\begin{abstract}
\footnotesize
\begin{center}
\textbf{Abstract}
\end{center}

\noindent
Any $6$-dimensional strict nearly Kähler manifold is Einstein with positive scalar curvature. We compute the coindex of the metric with respect to the Einstein-Hilbert functional on each of the compact homogeneous examples. Moreover, we show that the infinitesimal Einstein deformations on $F_{1,2}=\SU(3)/T^2$ are not integrable into a curve of Einstein metrics.

\textit{MSC (2020):} 53C24, 53C25, 53C30.

\textit{Keywords:} Einstein metrics, Stability, Rigidity, Lichnerowicz Laplacian, nearly Kähler.
\end{abstract}

\section{Introduction}
\label{sec:intro}

The special case of dimension $6$ has been a primary focus of nearly Kähler geometry since P.-A. Nagy showed that every nearly Kähler manifold is locally isometric to a Riemannian product of $6$-dimensional nearly Kähler manifolds, nearly Kähler homogeneous spaces and twistor spaces over positive scalar curvature quaternionic-Kähler manifolds \cite{nagy}. Moreover, nearly Kähler manifolds that are non-Kähler (so-called \emph{strict} nearly Kähler manifolds) of dimension $6$ exhibit other notable properties, such as carrying a real Killing spinor and thus being Einstein with positive scalar curvature.

On a compact manifold $M$, Einstein metrics can be variationally characterized as critical points of the total scalar curvature functional $S$ (also called \emph{Einstein--Hilbert action}), defined on the set of all Riemannian metrics on $M$ of a fixed volume. Given a compact Einstein manifold $(M,g)$, one can ask whether $g$ locally maximizes $S$ (after restricting to a suitable subclass of Riemannian metrics). Such an Einstein metric $g$ is called \emph{stable} with respect to $S$. The linearized problem considers the Hessian $S_g''$ of the Einstein-Hilbert action at $g$. Accordingly, an Einstein metric $g$ is called \emph{linearly stable} if $S_g''\leq 0$ on the space of tt-tensors (i.e. trace- and divergence-free symmetric $2$-tensors on $M$). A closely related notion is that of \emph{infinitesimal deformability} of the Einstein metric $g$ -- it is called infinitesimally deformable if $S_g''$ is degenerate on tt-tensors.

A compact, $6$-dimensional, strict nearly Kähler manifold $(M,g,J)$ with scalar curvature normalized to $\scal_g=30$ (hence Einstein constant $\Einstein=5$) is called a \emph{Gray manifold}, after A. Gray, who studied them in the 70s. The stability and infinitesimal deformability of Einstein metrics on Gray manifolds have already been investigated. In \cite{nkstability}, U. Semmelmann, C. Wang and M. Y.-K. Wang show linear instability if the second or third Betti number does not vanish -- in fact, the coindex of $g$ (see Section \ref{sec:prelimst} for a definition) is bounded below by $b_2+b_3$. A. Moroianu and U. Semmelmann \cite{nearlykaehler} give a description of the space of infinitesimal Einstein deformations in terms of eigenspaces of the Hodge Laplacian on coclosed primitive $(1,1)$-forms. The present article generalizes this result to a similar description of eigenspaces of the Lichnerowicz Laplacian on tt-tensors to arbitrary eigenvalues not exceeding a certain threshold (see Lemma~\ref{epsilon}).

Homogeneous Gray manifolds have been classified by J.-B. Buitruille \cite{butruilleclass}. There are only four cases: $S^6=\frac{G_2}{\SU(3)}$, $S^3\times S^3=\frac{\SU(2)\times\SU(2)\times\SU(2)}{\Delta\SU(2)}$, $\CP^3=\frac{\Sp(2)}{\Sp(1)\U(1)}=\frac{\SO(5)}{\U(2)}$ and the flag manifold $F_{1,2}=\frac{\SU(3)}{T^2}$, all of them equipped with the Killing form metric (up to scaling). In \cite{WW}, C. Wang and M. Y.-K. Wang show instability of the latter three spaces. $S^6$ carries the round metric and is thus strictly stable.

One aim of this article is to improve the coindex estimates from \cite{nkstability} to equalities for the homogeneous examples. Our first main result can be stated as follows.

\begin{satz}\label{thm1}
Let $(M,g)$ be a homogeneous Gray manifold with standard metric $g$. The coindex of the Einstein metric $g$ is
\begin{itemize}
 \item equal to $2$ if $M=S^3\times S^3=\frac{\SU(2)\times\SU(2)\times\SU(2)}{\Delta\SU(2)}$,
 \item equal to $1$ if $M=\CP^3=\frac{\SO(5)}{\U(2)}$,
 \item equal to $2$ if $M=F_{1,2}=\frac{\SU(3)}{T^2}$.
\end{itemize}
\end{satz}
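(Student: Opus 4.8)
The plan is to compute the coindex directly from harmonic analysis on each homogeneous space, using the estimates and the reduction established earlier in the paper. Recall that on a compact Einstein manifold the coindex equals the number (with multiplicity) of eigenvalues of the Lichnerowicz Laplacian $\Lcr$ on tt-tensors that lie strictly below $2\Einstein = 10$, and the infinitesimally deformable directions correspond to the eigenvalue $2\Einstein$ exactly. By Lemma~\ref{epsilon}, on a Gray manifold the relevant low-lying spectrum of $\Lcr$ on tt-tensors is controlled by the Hodge Laplacian spectrum on coclosed primitive $(1,1)$-forms, so the task reduces to: (i) identify which irreducible summands of $\Omega^{(1,1)}_0(M)$ contribute Hodge eigenvalues producing $\Lcr$-eigenvalues $< 10$, and (ii) count their total dimension. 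Since each of the three spaces is of the form $G/K$ with the standard (Killing form) metric, I would invoke the Peter--Weyl theorem: sections of the associated bundle decompose as $\bigoplus_\gamma V_\gamma \otimes \Hom_K(V_\gamma, \Lambda^{(1,1)}_0 \m^*)$, and the Hodge Laplacian acts on the $V_\gamma$-isotypic component as the Casimir eigenvalue $\Cas_\gamma$ (plus possibly a curvature correction term, which I would make precise). So the whole computation becomes: run through dominant weights $\gamma$ of $G$, check the branching condition $\Hom_K(V_\gamma,\Lambda^{(1,1)}_0\m^*)\ne 0$, compute $\Cas_\gamma$, and collect those with Casimir below the threshold.

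Concretely I would proceed space by space. For $S^3\times S^3 = (\SU(2)^3)/\Delta\SU(2)$: here $b_2 = b_3 = 0$, so the lower bound $b_2+b_3$ from \cite{nkstability} is vacuous and the coindex $2$ must come from genuinely non-cohomological tt-eigentensors; I would decompose $\m = \su(2)\oplus\su(2)$ (the reductive complement), work out $\Lambda^{(1,1)}_0\m^*$ as a $\Delta\SU(2)$-representation, enumerate the $\SU(2)^3$-representations $V_\gamma = V_a\boxtimes V_b\boxtimes V_c$ containing it upon restriction, and compute Casimir eigenvalues $\tfrac14(a(a+2)+b(b+2)+c(c+2))$ against the threshold — expecting exactly one $2$-dimensional contribution (likely from a "lowest" nontrivial triple). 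For $\CP^3 = \SO(5)/\U(2)$: here $b_2 = 1$, so \cite{nkstability} already gives coindex $\geq 1$; I must show there is \emph{no} additional contribution, i.e. the only sub-threshold tt-eigenvalue is the one coming from the Kähler form class (the harmonic $2$-form), using the branching rules for $\SO(5)\downarrow\U(2)$ and checking all other candidate $\gamma$ have Casimir $\geq 10$. For $F_{1,2} = \SU(3)/T^2$: here $b_2 = 2$, giving coindex $\geq 2$ from \cite{nkstability}; again I would verify via $\SU(3)\downarrow T^2$ branching (weight counting, since $T^2$ is abelian) that nothing below threshold appears besides those two cohomological directions.

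The main obstacle I anticipate is twofold. First, getting the reduction in Lemma~\ref{epsilon} to bite: I need a clean dictionary translating "$\Lcr$-eigenvalue on tt-tensors $< 2\Einstein$" into a precise numerical condition on Hodge/Casimir eigenvalues on primitive $(1,1)$-forms, including correctly tracking the shift constants (the nearly Kähler structure contributes explicit curvature terms, and the standard metric's scaling must be pinned down so that $\scal_g = 30$). Second, the branching-rule bookkeeping: for $\CP^3$ and $F_{1,2}$ I must be \emph{exhaustive} — it is not enough to exhibit the expected eigentensors, I have to rule out all others, which means either a finite a priori bound on the relevant $\gamma$ (from the threshold, since Casimir grows) reducing to a finite check, or a structural argument (e.g. the bundle $\Lambda^{(1,1)}_0\m^*$ embeds in few isotypic pieces). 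I would organize this as: first establish the threshold lemma in usable form, then for each space set up the bundle decomposition, then do the finite Casimir search, and finally match the count against the Betti-number lower bound to conclude equality. The $S^3\times S^3$ case is where I expect the real work, since the answer $2$ is not forced by topology and the three-fold product makes the representation theory richest; I would treat it last and most carefully.
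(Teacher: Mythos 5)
Your overall strategy is the same as the paper's (reduce via Lemma~\ref{epsilon} to low eigenvalues on primitive $(1,1)$-forms, then Peter--Weyl, branching and a finite Casimir search below the threshold), but as written it has a gap on which the actual count hinges. The Casimir operator computes the \emph{Hermitian} Laplacian $\bar\Delta$ on all of $\Omega^{1,1}_0$, whereas Lemma~\ref{epsilon} requires eigenvalues of the Hodge Laplacian on \emph{coclosed} primitive $(1,1)$-forms; the two operators agree only on coclosed forms, so there is no ``curvature correction'' to add, but there is a coclosedness restriction to impose. After the finite Casimir search you must still decide which sub-threshold isotypic components actually contain coclosed forms. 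In the paper this is done by writing out the prototypical codifferential on Fourier coefficients (Lemma~\ref{deltank6}) and computing it explicitly (Lemmas~\ref{9coclosed} and \ref{cp3coclosed}). This step is not a formality: on $S^3\times S^3$ the Casimir eigenvalue $9$ (from $V_{(1,1,0)}$ and its permutations) and on $\CP^3$ the eigenvalue $8$ (from the standard representation $V_{(1,0)}$) both lie below $12$, and if their eigenforms were coclosed they would produce additional destabilizing directions (solve $\mu_1=9$, resp.\ $\mu_1=8$, for $\varepsilon>0$ in Lemma~\ref{epsilon}); the explicit divergence computation shows they are not coclosed. ``Collect those with Casimir below the threshold'' therefore overcounts, or at best leaves the count unjustified, in two of the three cases.

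The $S^3\times S^3$ case is also set up on a wrong topological premise: $b_3(S^3\times S^3)=2$, not $0$, and the coindex $2$ there comes precisely from harmonic $3$-forms via case (ii) of Lemma~\ref{epsilon} ($\varepsilon=6$, giving $\Delta_L$-eigenvalue $4$), i.e.\ from the $\sigma$-part in $\Omega^{2,1}_\R$ rather than from any low-lying $(1,1)$-eigenvalue; the bound $b_2+b_3$ of \cite{nkstability} is already sharp here. Since your reduction keeps only the coclosed $(1,1)$-spectrum, your scheme would miss this genuine contribution while (absent the coclosedness check) retaining the spurious Casimir-$9$ one, so the hardest case in your plan would come out wrong rather than merely laborious. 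For $\CP^3$ and $F_{1,2}$ your expectations match the paper -- the only contributions are the $b_2$ harmonic $2$-forms, entering with $\varepsilon=4$ and $\Delta_L$-eigenvalue $6$ -- but again it is only the codifferential computation that rules out $V_{(1,0)}$ on $\CP^3$. A small further correction: with the normalization $\scal_g=30$ the Casimir constant on $\SU(2)^3$ is $\tfrac32\bigl(a(a+2)+b(b+2)+c(c+2)\bigr)$, not $\tfrac14(\cdots)$.
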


The destabilizing directions, i.e. contributions to the coindex, can be viewed as arising from harmonic $3$-forms in the first and from harmonic $2$-forms in the second and third case via the construction in \cite{nkstability}. For the last two cases, there is an additional geometric explanation: consider the Riemannian submersions given by the twistor fibrations
\begin{align*}
\CP^3=\frac{\SO(5)}{\U(2)}&\longrightarrow\frac{\SO(5)}{\SO(4)}=S^4,\\
F_{1,2}=\frac{\SU(3)}{T^2}&\longrightarrow\frac{\SU(3)}{\mathrm{S}(\SU(2)\U(1))}=\CP^2.
\end{align*}
In both cases, the canonical variation (scaling the base against the fiber) yields a destabilizing direction by \cite[Prop.~4.4]{WW}. For the flag manifold, there are actually three such fibrations whose canonical variations give rise to a two-dimensional space of tt-tensors, explaining the coindex of $2$ (see Remark~\ref{flagcs}).

Also worth noting is the $G$-invariant stability problem, in which the Einstein-Hilbert functional $S$ is restricted to the class of $G$-invariant metrics on a fixed homogeneous space $M=G/H$. Since the destabilizing directions on all three cases in Theorem~\ref{thm1} are $G$-invariant (as explained in Remark~\ref{Ginv}), it follows that their metrics are $G$-unstable. In fact, they are even $G$-strongly unstable, i.e. \emph{all} $G$-invariant tt-variations of the metric are destabilizing and hence these metrics are local minima of $S$ among $G$-invariant metrics. For $\CP^3$ \cite[Table~1, 7a]{ejlauret} and $F_{1,2}$ \cite[Table~2]{jlauret} this was already known from the results of E.~A.~Lauret and J.~Lauret. For $S^3\times S^3$ see Remark~\ref{s3xs3Gstr}.

Let us return to the general setting of a compact manifold $M$. An Einstein metric $g$ on $M$ is called \emph{rigid} if it is isolated in the moduli space of Einstein structures (disregarding variation by homothetic scaling and action of diffeomorphisms). If an Einstein manifold $(M,g)$ admits infinitesimal Einstein deformations, one naturally asks whether they are integrable into a curve of Einstein metrics on $M$. In fact, not every infinitesimally deformable Einstein must lie within a nontrivial curve of Einstein metrics. The first example of such a metric is the canonical symmetric metric on $\CP^1\times\CP^{2k}$ found by N. Koiso \cite{koisorigidity}, who started the investigation of stability and infinitesimal deformatibility of symmetric spaces \cite{koiso}. Another recent example due to W. Batat, S. J. Hall, T. Murphy and J. Waldron is the bi-invariant metric on $\SU(2n+1)$ \cite{sunrigidity}. We add one more example to this list by proving the following result.

\begin{satz}\label{thm2}
The Einstein metric on the Gray manifold $F_{1,2}$ is rigid, that is, its infinitesimal Einstein deformations are not integrable.
\end{satz}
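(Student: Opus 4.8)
The standard approach to obstructing integrability of infinitesimal Einstein deformations (IEDs) goes back to Koiso and uses the Kuranishi-type obstruction: if $h$ is an IED, then $h$ integrates to second order iff a certain quadratic obstruction tensor $\Phi(h)$ is $L^2$-orthogonal to all IEDs. Concretely, one needs the second variation of the Einstein condition; the obstruction is that the projection of a quadratic expression in $h$ (built from $\nabla h$, the curvature, and the operator $(\Lcr - 2E)^{-1}$ applied to terms like the Ricci-type correction $(\mathring{R}h)\circ h$, etc.) onto the space of IEDs must vanish. So the plan is: (1) identify explicitly the space of IEDs on $F_{1,2}$ using the description from this paper (eigenspaces of the Hodge Laplacian on coclosed primitive $(1,1)$-forms, via Moroianu–Semmelmann), pin down its dimension and a convenient basis adapted to the isotropy representation; (2) note that since $G = \SU(3)$ acts on $F_{1,2}$ and the Einstein metric is $G$-invariant, the space of IEDs carries a $G$-representation, and the obstruction map $h \mapsto \langle \Phi(h,h), h\rangle$ is $G$-equivariant (in fact a cubic form invariant under the symmetry group of the deformation space); (3) compute this cubic form — or enough of it — on the IED space and show it is not identically zero, which already obstructs integrability, or more precisely show the quadratic obstruction $\Phi(h,h)$ has nonzero IED-component for the relevant $h$.

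The main technical engine is the homogeneous structure: on $G/H$ everything reduces to $\Ad(H)$-equivariant linear algebra on $\m = \g/\h$ plus Casimir eigenvalue bookkeeping. I would expand the IEDs in terms of $H$-invariant pieces of $\Sym^2\m$, use that the Lichnerowicz Laplacian acts by (Casimir + curvature) and is explicitly diagonalizable on each isotypic $G$-summand of $L^2(\Sym^2 T^*M)$, and thereby compute $(\Lcr - 2E)^{-1}$ on the finitely many summands that appear in $\Phi(h,h)$. The key simplification particular to $F_{1,2}$: the IEDs come from harmonic $2$-forms, equivalently (via the three twistor fibrations) from the canonical variations, so the IED space is essentially the $2$-dimensional space spanned by these canonical-variation tensors as in Remark~\ref{flagcs}. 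On this small space the obstruction becomes a genuinely finite computation — a symmetric cubic form on $\R^2$ — which one evaluates by reducing all the relevant products of invariant tensors to structure-constant computations in $\su(3)$.

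The hard part, and where care is needed, is the explicit evaluation of the second-order obstruction integrand. One must (a) correctly assemble the obstruction tensor — following Koiso, this is $\mathrm{pr}_{\ker}\big[ -\tfrac12 \Lcr(h\cdot h) + \dots \big]$ with the precise lower-order terms, or equivalently use the formulation via $\nabla^2$ of the Einstein operator — and (b) actually carry out the contraction $\langle \Phi(h,h), h'\rangle$ over $\m$ for $h, h'$ ranging over the IED basis. Because $\Phi$ is quadratic and the IED space is $G$-invariant, Schur's lemma drastically cuts the number of independent components: the cubic form is determined by its value on a single highest-weight-type vector together with equivariance, so in practice one computes one or two numbers. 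The genuine risk is a sign or normalization error propagating through the chain of identifications (nearly Kähler structure equations, the factor conventions in $\Lcr$, the identification of IEDs with $2$-forms), so I would cross-check the final nonvanishing against the known $G$-strong instability of $F_{1,2}$ and against the general principle that a nonzero equivariant cubic form on the IED space forces rigidity. Once the cubic form is shown to be nonzero — which I expect from the asymmetry introduced by the nearly Kähler torsion — Theorem~\ref{thm2} follows immediately, since a deformation curve would force the cubic obstruction to vanish at its IED-derivative.
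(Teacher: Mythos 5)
Your overall strategy is the right one and matches the paper's: use Koiso's second-order criterion ($h$ integrates to order $2$ iff $\Eop''_g(h,h)\perp\varepsilon(g)$), exploit $G$-equivariance to reduce the obstruction to a finite cubic form on the IED space, and evaluate it by structure-constant computations on $\m\subset\su(3)$. However, there is a concrete error that would derail the computation: you identify the IED space with the $2$-dimensional space spanned by the canonical-variation tensors coming from harmonic $2$-forms (Remark~\ref{flagcs}). Those tensors are the \emph{destabilizing} directions, with $\Delta_L$-eigenvalue $6<2\Einstein=10$; they are not in the null space of $S''_g$ at all. The actual space $\varepsilon(g)$ of infinitesimal Einstein deformations of $F_{1,2}$ is $8$-dimensional and isomorphic to $\su(3)$, arising from the coclosed part of the $\bar\Delta$-eigenspace to the eigenvalue $12$ on primitive $(1,1)$-forms (equivalently $E(2)\oplus E(6)\oplus E(12)$ in the Moroianu--Semmelmann picture, of which only $E(12)$ contributes here). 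Setting up the cubic obstruction on the wrong $2$-dimensional space would not prove anything about the genuine deformations; the correct computation lives on $\su(3)$, where equivariance forces $\mathcal{I}(h_\xi,h_\xi,h_\xi)=c\cdot\i\det(\xi)$, and the work consists in showing $c\neq0$.

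The second gap is logical: you claim that once the cubic form is shown to be not identically zero, rigidity ``follows immediately.'' It does not. Rigidity requires that \emph{every} nonzero IED fails to integrate, and by polarization the second-order criterion for a given $h_\xi$ is that $\xi$ be a critical point of the cubic form, not that the form vanish at $\xi$. A nonzero cubic form can perfectly well have nonzero critical points (e.g.\ $x^3$ on $\R^2$), in which case the corresponding deformations would be unobstructed at second order and the argument would stall. In the present case one needs the additional observation that critical points of $\det$ on $\su(3)$ are exactly the matrices of rank at most $1$, and that no nonzero skew-hermitian matrix has rank $1$; only this eliminates all nonzero IEDs simultaneously and yields Theorem~\ref{thm2}. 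You should add both the correct identification $\varepsilon(g)\cong\su(3)$ and this final critical-point argument; with those in place your plan coincides with the paper's proof.
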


In all of the above examples, integrability fails at an obstruction to second order (see the end of Section \ref{sec:prelimst}). We suspect that this phenomenon occurs generically. Given some infinitesimal Einstein deformation, i.e. an element of the null space of $S_g''$, the obstruction polynomial (\ref{obstruction}) has no immediate compulsion to vanish and should do so only coincidentally -- see for example the case $\SU(2n)$ in \cite{sunrigidity}.

Since Gray manifolds are Einstein, every infinitesimal deformation of the nearly Kähler structure corresponds to an infinitesimal Einstein deformation, but not necessarily vice versa \cite{nearlykaehler}. Infinitesimal deformability of the nearly Kähler structure has been investigated by A. Moroianu, P.-A. Nagy and U. Semmelmann \cite{nkdeformations}. The question whether a given infinitesimal nearly Kähler deformation can be integrated into a curve of nearly Kähler structures has been studied by L. Foscolo in \cite{foscolo}, where a similar polynomial occurs as integrability obstruction to second order. In particular, he showed that the infinitesimal nearly Kähler deformations on $F_{1,2}$ are all obstructed. One can view Theorem~\ref{thm2} as a generalization of this result to the Einstein picture. The Einstein metrics and thus nearly Kähler structures on homogeneous Gray manifolds other than $F_{1,2}$ are automatically rigid since they possess no infinitesimal deformations \cite{nearlykaehler}.

This article is organized as follows. In Section \ref{sec:prelim}, notation is fixed and the necessary preliminaries are recapitulated. Section \ref{sec:general} concerns itself with a description of eigenspaces of the Lichnerowicz Laplacian on tt-tensors on general Gray manifolds as well as a discussion of the homogeneous case, in which explicit calculations are possible by means of harmonic analysis. These results are applied in Section \ref{sec:examples} to each of the unstable Gray manifolds $S^3\times S^3$, $\CP^3$ and $F_{1,2}$ to obtain the results collected in Theorem~\ref{thm1}. Finally, Section \ref{sec:rigidity} recalls the description of the infinitesimal Einstein deformations on $F_{1,2}$ given in \cite{hermitianlaplace} and proceeds to show the nonintegrability to second order, proving Theorem~\ref{thm2}.

The author owes gratitude to Prof. U. Semmelmann for helpful exchanges about a gap in the argument given in the proof of \cite[Thm.~5.1]{nearlykaehler} (the corrected argument is the proof of Lemma~\ref{equivsystem}, which includes the aforementioned as the special case $\lambda=10$). Furthermore, the author would like to thank Prof. G. Weingart for his useful suggestions regarding the rigidity argument.

\section{Preliminaries}
\label{sec:prelim}

\subsection{Nearly Kähler manifolds}
\label{sec:prelimnk}

An almost Hermitian manifold $(M,g,J)$ is an even-dimensional Riemannian manifold $(M,g)$ with an almost complex structure $J$ that is compatible with the metric, i.e.
\[g(JX,JY)=g(X,Y)\]
for any $X,Y\in T_pM$. The Kähler form $\omega$ is then defined by
\[\omega(X,Y):=g(JX,Y).\]
Any almost Hermitian structure has an associated \emph{canonical Hermitian connection} $\bar\nabla$ (see, for example, \cite[Sec.~2]{3sym} for a general definition). In particular, it satisfies $\bar\nabla g=0$ and $\bar\nabla J=0$.

Let $\nabla$ denote the Levi-Civita connection of the Riemannian manifold $(M,g)$. An almost Hermitian manifold $(M,g,J)$ is called \emph{nearly Kähler} if $\nabla J$ is skew-symmetric, or equivalently, if
\[(\nabla_XJ)X=0\]
for all $X\in T_pM$. In this case, the canonical Hermitian connection can be described by
\[\bar\nabla_XY=\nabla_XY-\frac{1}{2}J(\nabla_XJ)Y\]
for any two vector fields $X,Y\in\X(M)$. A nearly Kähler manifold is called \emph{strictly nearly Kähler} if it is not Kähler. \emph{Gray manifolds} are compact strict nearly Kähler manifolds of dimension $6$.

As usual, the almost complex structure $J$ defines a splitting of the complexified cotangent bundle $T^\ast M^\C=\Lambda^{1,0}M\oplus\Lambda^{0,1}M$ and hence of the bundle of $k$-forms into $(p,q)$-forms with $p+q=k$. The complex bundle of $(p,q)$-forms will be denoted with the prefix $\Lambda^{p,q}$, and the space of its smooth sections by $\Omega^{p,q}$. The Kähler form $\omega$ is of type $(1,1)$. A $(p,q)$-form $\alpha$ is called \emph{primitive} if it vanishes under contraction with the Kähler form, i.e. if $\omega\lrcorner\alpha=0$. We will denote the bundle of primitive $(p,q)$-forms by $\Lambda^{p,q}_0$. Furthermore, let $\Lambda^{p,q}_\R$ denote the projection of the complex bundle $\Lambda^{p,q}$ to the real bundle $\Lambda^{p+q}$.

Likewise, the bundle $\Sym TM$ of $g$-symmetric endomorphisms of the tangent bundle splits into a direct sum $\Sym^+TM\oplus\Sym^-TM$, where the elements of $\Sym^\pm TM$ commute (resp. anticommute) with $J$. We further denote by $\Sym^+_0TM$ the subbundle of trace-free endomorphisms in $\Sym^+TM$, and with $\S^\pm$, $\S^+_0$ the spaces of smooth sections in the respective bundles.

Let $\S^k=\Gamma(\Sym^kT^\ast M)$ denote the space of symmetric $k$-tensor fields. Note that the metric yields a natural identification $\Sym^2T^\ast M\cong\Sym TM$. The subspace of tt-tensors in $\S^2$ (i.e. $h\in\S^2$ satisfying $\tr_gh=0$ and $\delta h=0$) will be denoted by $\TT$.

If $(M,g,J)$ is nearly Kähler, then the tensor $\Psi^+:=\nabla\omega$ is to\-tal\-ly skew-sym\-met\-ric and in fact the real part of a $\bar\nabla$-parallel complex volume form $\Psi^++\i\Psi^-$. The imaginary part $\Psi^-$ can be described by $X\lrcorner\Psi^-=J\circ(\nabla_XJ)$ for all $X\in TM$. The strict nearly Kähler case is characterized by the non-vanishing of $\Psi^+$.

Let $(M,g,J)$ be a strict nearly Kähler manifold of dimension $6$. There are $\bar\nabla$-parallel isomorphisms
\begin{align}
 TM&\cong\Lambda^{2,0}_\R M&\Sym^+_0TM&\cong\Lambda^{1,1}_{0,\R}M&\Sym^-TM&\cong\Lambda^{2,1}_\R M\label{iso}\\
 X&\mapsto X\lrcorner\Psi^+&h&\mapsto J\circ h&h&\mapsto h_\ast\Psi^+\notag
\end{align}
of vector bundles with structure group $\SU(3)$, each arising from an equivalence of $\SU(3)$-representations. Here, $h_\ast$ denotes the extension of the endomorphism $h\in\End TM$ to tensor bundles as a derivation.

\subsection{Stability and rigidity}
\label{sec:prelimst}

The \emph{Lichnerowicz Laplacian} $\Delta_L$ of a Riemannian manifold $(M,g)$ is an operator that generalizes the Hodge Laplacian $\Delta$ on differential forms to tensor fields of any rank. It is defined by
\[\Delta_L:=\nabla^\ast\nabla+q(R),\]
where $q(R)$ is the curvature endomorphism acting on tensors by
\[q(R):=\sum_{i<j}(e_i\wedge e_j)_\ast R(e_i,e_j)\]
for some local orthonormal frame $(e_i)$ of $TM$. The asterisk denotes the natural action of $\Lambda^2T\cong\so(T)$. In particular, $q(R)=\Ric$ on $1$-forms.

On an almost Hermitian manifold, we analogously define the \emph{Hermitian Laplace operator} $\bar\Delta$ by replacing the Levi-Civita connection $\nabla$ in the above definition by the canonical Hermitian connection $\bar\nabla$, i.e.
\[\bar\Delta:=\bar\nabla^\ast\bar\nabla+q(\bar R).\]
Here, $R$ and $\bar R$ denote the curvature tensors of the connections $\nabla$ and $\bar\nabla$, respectively. Both $\Delta_L$ and $\bar\Delta$ are instances of the \emph{standard Laplacian} of a given connection (see \cite{standardlapl}), an operator with several neat properties -- for example, it commutes with parallel bundle maps. Comparison formulas for the two Laplace operators in the setting of $6$-dimensional nearly Kähler manifolds can be found in \cite{hermitianlaplace} and \cite{nearlykaehler}. For our purposes, it is important to note that $\Delta$ and $\bar\Delta$ coincide on coclosed primitive $(1,1)$-forms, as well as on coclosed $(2,1)$- and $(1,2)$-forms, which follows from combining Cor.~3.5 and Cor.~4.4 of \cite{nearlykaehler}.

Consider a fixed compact orientable smooth manifold $M$ of dimension $n>2$. On the set of all Riemannian metrics on $M$, the \emph{total scalar curvature functional} (or \emph{Einstein--Hilbert action}) is defined by
\[g\mapsto S(g)=\int_M\scal_g\vol_g.\]
Einstein metrics on $M$ are then precisely the critical points of the restriction of $S$ to metrics of a fixed total volume. Let $(M,g)$ be an Einstein manifold with $\Ric=\Einstein g$. If $(M,g)$ not isometric to the standard sphere, there is a well-known decomposition
\[\S^2=\R g\oplus C^\infty_gg\oplus L_\X g\oplus\TT\]
that is orthogonal with respect to the second variation $S''_g$ (see \cite{besse}). Furthermore,
\begin{align*}
S''_g&>0\text{ on }C^\infty_gg,\text{ where }&C^\infty_g&=\{f\in C^\infty(M)\,|\,(f,\mathbf{1})_{L^2}=0\},\\
S''_g&=0\text{ on }&L_\X g&=\{L_Xg\,|\,X\in\X(M)\},\\
S''_g(h,h)&=-\dfrac{1}{2}\left(\Delta_L h-2\Einstein h,h\right)_{L^2}\text{ on }&\TT&=\{h\in\S^2\,|\,\tr_gh=0,\ \delta h=0\}.
\end{align*}
On the latter space, $S''_g$ has finite coindex and nullity, i.e. the maximal subspace of $\TT$ on which $S''_g$ is nonnegative is finite-dimensional. The sum $L_\X g\oplus\TT=T_g\Sl$ can also be regarded as formal tangent space to the set $\Sl$ of metrics with constant scalar curvature and fixed total volume.

The stability problem is to decide whether an Einstein metric $g$ is a local maximum or a saddle point of $S\big|_{\Sl}$. We are primarily concerned with the linearized version, considering only the second variation of $S$ at $g$. An Einstein metric $g$ is called \emph{(linearly) stable} if $S''_g\big|_{\TT}\leq0$, or, equivalently, if $\Delta_L\geq 2\Einstein$ on $\TT$. If strict inequality holds, we call $g$ \emph{strictly stable}. On the other hand, $g$ is called \emph{(linearly) unstable} if there exists $h\in\TT$ such that $S_g''(h,h)>0$, or, equivalently, if $(\Delta_Lh,h)_{L^2}<2\Einstein\|h\|^2_{L^2}$. The dimension of the maximal subspace of $\TT$ on which $S_g''>0$ is called the \emph{coindex} of $g$.

A closely related notion is that of rigidity. An Einstein metric $g$ is called \emph{rigid} if it is isolated in the moduli space, i.e. the space of Einstein metrics modulo diffeomorphisms and homotheties. Since the moduli space is locally arcwise connected \cite[Cor.~12.52]{besse}, rigidity of $g$ is equivalent to the nonexistence of a smooth curve $(g_t)$ of Einstein metrics through $g=g_0$ with nonvanishing first-order jet $\dot g_0\in\TT$.

Denote by $\varepsilon(g)=\{h\in\TT\,|\,\Delta_Lh=2\Einstein h\}$ the null space of $S''_g$, also called the space of \emph{infinitesimal Einstein deformations (IED)}. If $\varepsilon(g)\neq0$, we call $g$ \emph{infinitesimally deformable}. A metric with $\varepsilon(g)=0$ is automatically rigid \cite[Cor.~12.66]{besse} -- in particular, strict stability implies rigidity.

In general, IED need not be integrable into a curve of Einstein metrics. On the set of unit volume Riemannian metrics, define the Einstein operator $\Eop$ by
\[\Eop(g):=\Ric_g-\frac{S(g)}{n}g.\]
Then a metric $g$ is Einstein if and only if $\Eop(g)=0$. An IED $h\in\varepsilon(g)$ is called \emph{formally integrable to order $k$} if there exist $h_2,\ldots,h_k\in\S^2$ such that
\[\Eop\left(g+th+\sum_{j=2}^k\frac{t^k}{k!}h_k\right)=0.\]
A classical result \cite[Cor.~12.50]{besse} is that an IED $h\in\varepsilon(g)$ can be integrated into a curve $(g_t)$ of Einstein metrics with $\dot g_0=h$ if and only if it is formally integrable to all orders $k\geq2$.

The integrability criterion to each order can be expressed in terms of derivatives of $\Eop$. By a result of N. Koiso \cite[Lem.~4.7]{koisorigidity}, $h\in\varepsilon(g)$ is integrable to order $2$ if and only if $\Eop''_g(h,h)\perp\varepsilon(g)$ in the $L^2$ sense. Also due to N. Koiso \cite[Lem.~4.3]{koisorigidity} is the formula
\begin{align}
2\left(\Eop''_g(h,h),h\right)_{L^2}=\int_M\big(&2\Einstein h_{ij}h_{ik}h_{jk}+3(\nabla_{e_i}\nabla_{e_j}h)_{kl}h_{ij}h_{kl}\notag\\
&-6(\nabla_{e_i}\nabla_{e_j}h)_{kl}h_{ik}h_{jl}\big)\vol_g
\label{obstruction}
\end{align}
for the second order obstruction, where we implicitly sum over a local orthonormal frame $(e_i)$ of $TM$. The vanishing of the quantity in (\ref{obstruction}) is a necessary condition for the integrability of $h$.

\subsection{Harmonic analysis}
\label{sec:prelimha}

Let $(M=G/H,g)$ be a Riemannian homogeneous space, where $G$ is some Lie group and $H$ is a closed subgroup. We will always denote the corresponding Lie algebras by $\g$ and $\h$, respectively. The homogeneous space $M$ is called \emph{reductive} if there exists an $\Ad\big|_H$-invariant complement $\m$ of $\h\subset\g$. This is always the case if $H$ is compact (in particular, if $G$ is compact). Through the canonical projection $\pi: G\to M$, the reductive complement $\m\subset\g\cong T_eG$ is canonically identified with the tangent space $T_oM$ at the base point $o=eH$.

The $G$-invariant metric on a reductive Riemannian homogeneous space $(M,g)$ is determined by an $\Ad(H)$-invariant inner product on $\m$. Suppose that $Q$ is an $\Ad(G)$-invariant inner product on $\g$. Then $\m:=\h^\perp$ is an $\Ad(H)$-invariant subspace. We call $(M,g)$ a \emph{normal} homogeneous space if the metric is induced by the restriction $Q$ to $\m$, i.e. $g_o=Q\big|_{\m\times\m}$. If $G$ is compact and semisimple, then the Killing form $B_\g$ is negative-definite. In this case, the \emph{standard} metric is defined by $g_o=-B_\g\big|_{\m\times\m}$.

A normal homogeneous space is in particular \emph{naturally reductive}, i.e.
\[g_o([X,Y]_\m,Z)+g_o(Y,[X,Z]_\m)=0\]
(where $X_\m$ denotes the projection of $X$ to $\m$) holds for all $X,Y,Z\in\m$.

Let $\rho: H\to\Aut V$ be a finite-dimensional (real or complex) representation. Denote by $VM=G\times_\rho V$ the associated homogeneous vector bundle over $M$. Its sections can be viewed as $H$-equivariant smooth $V$-valued functions on $G$ -- the isomorphism is explicitly given by
\[\Gamma(VM)\stackrel{\cong}{\longrightarrow} C^\infty(G,V)^H:\ s\mapsto\hat s,\]
where $s(xH)=[x,\hat s(x)]$ for any $x\in G$. Left-translation on sections of $VM$ gives rise to the \emph{left-regular representation} on $C^\infty(G,V)^H$, explicitly given by
\[\ell:\ G\to\Aut C^\infty(G,V)^H:\ (\ell(x)f)(y)=f(x^{-1}y)\]
for $x,y\in G$.

If $M$ is reductive, we can write every tensor bundle as an associated bundle of some tensor power of the reductive complement. For example,
\[\S^2=\Gamma(\Sym^2T^\ast M)\cong\Gamma(G\times_\rho\Sym^2\m)\cong C^\infty(G,\Sym^2\m)^H\]
(note that the $H$-representations $\m$ and $\m^\ast$ are equivalent via the Riemannian metric).

For a compact Lie group $G$, denote by $\hat{G}$ the set of dominant integral weights of $G$ (after choosing a suitable maximal torus $T\subset G$). Recall that the elements of $\hat G$ are in one-to-one correspondence with equivalence classes of irreducible complex representations of $G$. Any representative of such a class with highest weight $\gamma\in\hat G$ will be denoted by $(V_\gamma,\rho_\gamma)$. Let $V$ be a unitary representation of $H$. The homogeneous version of the Peter-Weyl theorem \cite[Thm.~5.3.6]{wallach} states that the left-regular representation decomposes into
\begin{equation}
L^2(G,V)^H\cong\closedsum_{\gamma\in\hat G}V_\gamma\otimes\Hom_H(V_\gamma,V).\label{peterweyl}
\end{equation}
Here, $\Hom_H(V_\gamma,V)$ simply counts the multiplicity of $V_\gamma$ inside $L^2(G,V)^H$ and is called the space of \emph{Fourier (matrix) coefficients}. The equivalence in (\ref{peterweyl}) is made explicit by
\begin{equation}
V_\gamma\otimes\Hom_H(V_\gamma,V)\hookrightarrow C^\infty(G,V)^H:\ v\otimes F\mapsto\left(x\mapsto F(\rho_\gamma^{-1}(x)v)\right).\label{pwembedding}
\end{equation}

Let $V,W$ be unitary representations of $H$ and $\Do: \Gamma(VM)\to\Gamma(WM)$ be a $G$-invariant differential operator. Combining (\ref{peterweyl}) with Schur's Lemma, the operator $\Do$ acts as a linear mapping
\[\Do:\ \Hom_H(V_\gamma,V)\longrightarrow\Hom_H(V_\gamma,W)\]
for each fixed $\gamma\in\hat G$. We call this mapping the \emph{prototypical differential operator} associated to $\Do$ and $\gamma$ (as introduced by U. Semmelmann and G. Weingart in \cite{sw}).

On a reductive homogeneous space, a choice of reductive complement $\m$ determines a $G$-invariant connection $\CR$ on $VM$, called the \emph{canonical reductive} (or \emph{Ambrose-Singer}) connection, by stipulating that
\begin{equation}
\widehat{\CR_Xs}=\tilde{X}(\hat s)\label{CR}
\end{equation}
for all $X\in TM$, $s\in\Gamma(VM)$, where the \emph{horizontal lift} $\tilde X\in TG$ is the unique vector in the \emph{canonical horizontal distribution} $\H=\bigcup_{x\in G}dl_x(\m)$ such that $d\pi(\tilde X)=X$. This connection has the important property that all $G$-invariant sections of $VM$ are parallel. If $(M,g)$ is naturally reductive, $\CR$ is a metric connection with parallel totally skew torsion tensor $\tau$, given (at the base point) by
\[\tau_o(X,Y)=-[X,Y]_\m.\]

On any representation $\rho: G\to\Aut V$ of a compact Lie group $G$, the \emph{Casimir operator} with respect to a fixed $\Ad(G)$-invariant inner product on $\g$ is the equivariant endomorphism of $V$ defined by
\[\Cas^{\g,Q}_\rho=-\sum_i\rho_\ast(e_i)^2,\]
where $(e_i)$ is an orthonormal basis of $\g$ with respect to $Q$. We omit the superscript $Q$ if the choice of inner product is clear from context. For $\gamma\in\hat G$, the Casimir operator on $V_\gamma$ acts as multiplication with the \emph{Casimir constant}
\begin{equation}
\Cas^{\g,Q}_\gamma=\langle\gamma,\gamma+2\delta_\g\rangle_{\t^\ast,Q}\label{freudenthal}
\end{equation}
by Freudenthal's formula, cf. \cite{FH}. Here, $\langle\cdot,\cdot\rangle_{\t^\ast,Q}$ is the inner product induced by $Q$ on the dual $\t^\ast$ of the Lie algebra $\t$ of the torus $T\subset G$, while $\delta_\g$ denotes the half-sum of positive roots of $\g$.

A crucial fact \cite[Lem.~5.2]{hermitianlaplace} is that on a normal homogeneous space with Riemannian metric induced by an $\Ad(G)$-invariant inner product $Q$ on $\g$, the standard Laplacian of $\CR$ is precisely the Casimir operator of $G$ acting on the left-regular representation, i.e.
\begin{equation}
\Lcr:=(\CR)^\ast\CR+q(\Rcr)=\Cas^{\g,Q}_\ell.\label{laplcas}
\end{equation}
In particular, the prototypical differential operator associated to $\Lcr$ and $\gamma$ is simply multiplication by the Casimir constant. In other words, the eigenspaces of $\Lcr$ are the isotypical components 
\[V_\gamma\otimes\Hom_H(V_\gamma,V)\]
in the Peter-Weyl decomposition (\ref{peterweyl}). The eigenvalues are readily computable by means of Freudenthal's formula (\ref{freudenthal}).

It should be noted that in the symmetric case, the torsion of $\CR$ vanishes. Hence $\CR$ coincides with the Levi-Civita connection $\nabla$. It follows that $\Delta_L=\Lcr$, so the spectrum of the Lichnerowicz Laplacian on any tensor bundle is easily computable, facilitating the foundational work by N. Koiso on the stability of symmetric spaces \cite{koiso}.

\subsection{3-symmetric spaces}
\label{sec:prelim3s}

A homogeneous space $M=G/H$ is called \emph{3-symmetric} if there exists an automorphism $\sigma\in\Aut G$ of order $3$ such that $G_0^\sigma\subset H\subset G^\sigma$, where $G^\sigma$ is the fixed point set of $\sigma$ and $G^\sigma_0$ is the connected component of the identity in $G^\sigma$.

The complexified Lie algebra $\g^\C$ decomposes into eigenspaces of the differential at the base point $\sigma_\ast: \g\to\g$ as
\[\g^\C=\h^\C\oplus\m^+\oplus\m^-.\]
The eigenvalues of $\sigma_\ast$ are $1$ on $\h^\C$, $\j:=e^{\frac{2\pi\i}{3}}$ on $\m^+$ and $\j^2=\bar\j=e^{\frac{4\pi\i}{3}}$ on $\m^-$, respectively. $M$ then carries a natural $G$-invariant almost complex structure $J$ with $\pm\i$-eigenspaces $\m^\pm$, given by
\[\sigma_\ast\big|_\m=\frac{1}{2}\Id_\m+\frac{\sqrt{3}}{2}J_o\]
at the base point, where $\m^\C=\m^+\oplus\m^-$. Furthermore, $M$ is a reductive homogeneous space, since $\m$ is invariant under the adjoint action of $H\subset G^\sigma$.

When endowed with a $G$-invariant Riemannian metric $g$ compatible with $J$, $(M,g)$ is called a \emph{Riemannian 3-symmetric space}. In particular, $(M,g,J)$ is almost Hermitian. Furthermore, the almost Hermitian structure $(M,g,J)$ is nearly Kähler if and only if $(M,g)$ is naturally reductive \cite[Prop.~3.8]{3sym}.

For an extensive treatment of 3-symmetric spaces, see \cite{3sym}. The final thing we need for our purposes is the key observation \cite[Prop.~3.5]{3sym} that on a Riemannian 3-symmetric space, the canonical reductive connection $\CR$ associated to $\m$ coincides with the canonical Hermitian connection $\bar\nabla$ defined in Section \ref{sec:prelimnk}.

\section{Small Lichnerowicz eigenvalues on Gray manifolds}
\label{sec:general}

Throughout what follows, let $(M,g,J)$ be a Gray manifold. In order to classify destabilizing directions for the Einstein-Hilbert functional, we need to find all tt-eigentensors of the Lichnerowicz Laplacian to eigenvalues smaller than the critical eigenvalue $2\Einstein$. That is, we want to solve the system
\begin{equation}
\begin{cases}
 \Delta_Lh=\lambda h,\\
 \delta h=0
\end{cases}\tag{L1}\label{laplace1}
\end{equation}
in $h\in\S^2_0$ for some $\lambda<2\Einstein=10$. We follow the discussion in \cite[Sec.~5]{nearlykaehler} to transform (\ref{laplace1}) into an eigenvalue problem for the more familiar Hodge-deRham Laplacian. Viewed as a section of $\Sym TM$, the tensor $h$ splits into $h=h^++h^-$ with $h^+\in\S^+_0$ and $h^-\in\S^-$. By applying the bundle isomorphisms given in (\ref{iso}), we obtain tensors $\varphi:=h^+\circ J\in\Omega^{1,1}_{0,\R}$ and $\sigma:=h^-_\ast\Psi^+\in\Omega^{2,1}_{0,\R}$ carrying the information of $h$.

\begin{lem}\label{equivsystem}
Under the isomorphisms $h^+\mapsto\varphi$ and $h^-\mapsto\sigma$ above, if $\lambda<16$, the system of equations {\upshape(\ref{laplace1})} is equivalent to
\begin{equation}
\begin{cases}
\Delta\varphi=(\lambda-6)\varphi-\delta\sigma,\\
\Delta\sigma=(\lambda-4)\sigma-4d\varphi,\\
\delta\varphi=0,\\
\delta\sigma\in\Omega^{(1,1)}_{0,\R}.
\end{cases}\tag{L2}\label{laplacesystem}
\end{equation}
\end{lem}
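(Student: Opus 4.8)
The strategy is to translate the Lichnerowicz eigenvalue problem for the symmetric tensor $h$ into a coupled system for the differential forms $\varphi$ and $\sigma$ by systematically exploiting the $\bar\nabla$-parallel isomorphisms in (\ref{iso}) and the comparison between $\Delta_L$ and the Hodge Laplacian $\Delta$. First I would recall (or establish) the commutation behaviour of the various first-order operators with the bundle maps $h\mapsto h^+\circ J$ and $h\mapsto h^-_\ast\Psi^+$: since these isomorphisms are $\bar\nabla$-parallel, they intertwine the Hermitian Laplacian $\bar\Delta$ on $\S^+_0$, $\S^-$ with $\bar\Delta$ on $\Omega^{1,1}_{0,\R}$, $\Omega^{2,1}_\R$ respectively. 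So the heart of the computation is (i) a Weitzenböck-type comparison expressing $\Delta_L h$ in terms of $\bar\Delta h$ plus algebraic/first-order correction terms built from the torsion $\tau$ (equivalently from $\Psi^\pm$ and $\nabla J$), and (ii) the analogous comparison on forms relating $\Delta\varphi$, $\Delta\sigma$ to $\bar\Delta\varphi$, $\bar\Delta\sigma$. Both ingredients are available from \cite{nearlykaehler} and \cite{hermitianlaplace}; the key point recalled in Section~\ref{sec:prelimst} is that $\Delta=\bar\Delta$ on coclosed primitive $(1,1)$-forms and on coclosed $(2,1)$- and $(1,2)$-forms.

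Next I would carry out the bookkeeping of the cross-terms. Applying $\Delta_L$ to $h = h^+ + h^-$ and projecting onto the $\Sym^+_0$ and $\Sym^-$ summands produces, besides the ``diagonal'' Hermitian-Laplacian pieces, coupling terms that mix $h^+$ and $h^-$; under (\ref{iso}) these become precisely the $\delta\sigma$ term in the $\varphi$-equation and the $d\varphi$ term in the $\sigma$-equation. The constants $6$ and $4$ appearing as shifts in (\ref{laplacesystem}) should come out of the curvature endomorphism $q(R)$ (equivalently, the difference between $q(R)$ and $q(\bar R)$, which is algebraic in $\Psi^\pm$) acting on the respective $\SU(3)$-representation types $\Lambda^{1,1}_{0,\R}$ and $\Lambda^{2,1}_\R$; I would compute these by decomposing into $\SU(3)$-irreducibles and using Schur. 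The factor $4$ in front of $d\varphi$ likewise has to be pinned down by a direct symbol computation. Simultaneously, $\delta h = 0$ must be shown to be equivalent to the pair $\delta\varphi = 0$ together with the condition $\delta\sigma\in\Omega^{1,1}_{0,\R}$: the divergence of $h^+$ corresponds under $J$ to (a multiple of) $\delta\varphi$ up to a term involving $\sigma$ that is absorbed once one knows the type of $\delta\sigma$, and the divergence of $h^-$ corresponds to the $(2,0)+(0,2)$-part of $\delta\sigma$, whose vanishing is exactly the stated primitivity-and-type constraint. This is where the hypothesis $\lambda < 16$ enters: the reduction requires inverting or controlling some operator (presumably an algebraic operator on a higher-form component, or the Hodge Laplacian on an auxiliary $(2,0)$-piece) whose relevant eigenvalues stay away from $\lambda$ precisely when $\lambda$ is below the threshold $16$; one checks that on the obstructing representation the eigenvalue is $\ge 16$, so for $\lambda<16$ that component is forced to vanish and the system closes.

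The main obstacle I anticipate is the precise tracking of the first-order coupling terms and their constants: one must be careful that the isomorphisms in (\ref{iso}) are used with consistent normalizations (the scalar curvature is fixed at $\scal_g = 30$, so $|\Psi^+|$, $|\nabla J|$ and the torsion norm are all pinned down), and that the adjoints $d$, $\delta$ on the form side correspond correctly to the symmetrized covariant derivative and divergence on the tensor side under these maps. A clean way to organize this is to pass everything through the Hermitian connection $\bar\nabla$: write $\Delta_L = \bar\Delta + (\text{correction})$ and $\Delta = \bar\Delta + (\text{correction})$ on each bundle, note that $\bar\Delta$ commutes with the $\bar\nabla$-parallel isomorphisms, and then reduce the problem to matching the two correction terms. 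The remaining work — verifying the equivalence of the divergence conditions and that no information is lost — is then a finite check on $\SU(3)$-representations together with the threshold estimate $\lambda<16$. Since the author notes this corrects a gap in \cite[Thm.~5.1]{nearlykaehler} (the case $\lambda=10$), I expect the subtlety to lie exactly in the last equation $\delta\sigma\in\Omega^{1,1}_{0,\R}$ and in justifying that the $(2,0)+(0,2)$-component of $\delta\sigma$ really does vanish under the stated hypothesis rather than being merely assumed.
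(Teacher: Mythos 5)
You have correctly identified the overall architecture (pass everything through $\bar\nabla$, use the comparison formulas of \cite{nearlykaehler} between $\Delta_L$, $\bar\Delta$ and $\Delta$ on the various bundles, and track the first-order coupling terms $\delta\sigma$ and $4d\varphi$), and you have also correctly located the delicate point in the last two conditions of (\ref{laplacesystem}). However, your account of how that point is resolved contains a genuine gap, and your guess at the mechanism points in the wrong direction. Pure bookkeeping of the divergence condition does \emph{not} give $\delta\varphi=0$ and $\delta\sigma\in\Omega^{1,1}_{0,\R}$: what one actually gets from rewriting (\ref{laplace1}) via Prop.~3.4 and Cor.~4.4 of \cite{nearlykaehler} is, first, that the correction term must be a traceless symmetric $2$-tensor, which forces $\delta h^-\lrcorner\Psi^++(\delta\sigma)_{2,0}=0$ automatically; writing $(\delta\sigma)_{2,0}=\eta\lrcorner\Psi^+$ for some $1$-form $\eta$, the system then reads $\Delta\varphi=(\lambda-6)\varphi-\delta\sigma$, $\Delta\sigma=(\lambda-4)\sigma-4d\varphi-4\eta\wedge\omega$, $\delta\varphi=J\eta$, $(\delta\sigma)_{2,0}=\eta\lrcorner\Psi^+$. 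At this stage $\delta\varphi$ need not vanish, and the whole content of the lemma (and of the gap it closes in \cite[Thm.~5.1]{nearlykaehler}) is to prove $\eta=0$.

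The missing idea is analytic, not algebraic or representation-theoretic: apply $\delta$ to the first equation to get $\Delta\delta\varphi=\delta\Delta\varphi=(\lambda-6)\delta\varphi$ (the $\delta\delta\sigma$ term drops), so $\delta\varphi$ is a coclosed $1$-form eigenform of $\Delta$ with eigenvalue $\lambda-6$; the Lichnerowicz estimate $\Delta\geq 2q(R)=2\Einstein=10$ on coclosed $1$-forms then forces $\delta\varphi=0$ as soon as $\lambda-6<10$, i.e. precisely for $\lambda<16$, whence $\eta=0$ and the system collapses to (\ref{laplacesystem}). Your proposal instead attributes the threshold to ``inverting or controlling some operator'' on ``a higher-form component or an auxiliary $(2,0)$-piece'' checked ``on the obstructing representation''; such a check would in any case not be available here, since the lemma is stated for arbitrary Gray manifolds, not only homogeneous ones. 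Without the $\delta$-of-the-first-equation argument and the Bochner bound on coclosed $1$-forms, the step from the intermediate system to (\ref{laplacesystem}) — exactly the step you flagged as the likely subtlety — is not justified in your plan.
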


\begin{proof}
Using the formulae from Prop.~3.4 and Cor.~4.4 of \cite{nearlykaehler}, the first equation of (\ref{laplace1}) can be rewritten as
\begin{align*}
(\bar\nabla^\ast\bar\nabla+q(\bar R))(h^++h^-)=&\,\lambda(h^++h^-)-(3h^++s)-(2h^--(\delta h^-\lrcorner\Psi^++\delta\sigma)\circ J)\\
&-3h^+-2h^-,
\end{align*}
with $s\in\S^-$ defined by $s_\ast\Psi^+=2\delta h^+\wedge\omega+4d\varphi$. We note that $(\delta h^-\lrcorner\Psi^++\delta\sigma)\circ J$ is necessarily a traceless symmetric $2$-tensor, hence automatically $\delta h^-\lrcorner\Psi^++\delta\sigma\in\Omega^{1,1}_{0,\R}$, or, equivalently,
\[\delta h^-\lrcorner\Psi^++(\delta\sigma)_{2,0}=0.\]
Using that $\bar\nabla^\ast\bar\nabla+q(\bar R)$ preserves the spaces $\S^\pm$ and $\Omega^{2,0}_\R$, we can write (\ref{laplace1}) equivalently as
\[\begin{cases}
   (\bar\nabla^\ast\bar\nabla+q(\bar R))h^+=(\lambda-6)h^++(\delta\sigma)_{1,1}\circ J,\\
   (\bar\nabla^\ast\bar\nabla+q(\bar R))h^-=(\lambda-4)h^--s,\\
   \delta h^++\delta h^-=0.
  \end{cases}\]
Let now $\eta\in\Omega^1$ such that $(\delta\sigma)_{2,0}=\eta\lrcorner\Psi^+$. Since $\delta h^+=-J\delta\varphi$ and $\bar\nabla^\ast\bar\nabla+q(\bar R)$ commutes with the bundle isomorphisms from (\ref{iso}), we can apply them to obtain
\[\begin{cases}
   (\bar\nabla^\ast\bar\nabla+q(\bar R))\varphi=(\lambda-6)\varphi-(\delta\sigma)_{1,1},\\
   (\bar\nabla^\ast\bar\nabla+q(\bar R))\sigma=(\lambda-4)\sigma-2\eta\wedge\omega-4d\varphi,\\
   \delta\varphi=J\eta,\\
   (\delta\sigma)_{2,0}=\eta\lrcorner\Psi^+.
  \end{cases}\]
Using the remaining formulae in Cor.~3.5 and Cor.~4.4 of \cite{nearlykaehler}, we see that this is equivalent to
\[\begin{cases}
    \Delta\varphi=(\lambda-6)\varphi-\delta\sigma,\\
    \Delta\sigma=(\lambda-4)\sigma-4d\varphi-4\eta\wedge\omega,\\
   \delta\varphi=J\eta,\\
   (\delta\sigma)_{2,0}=\eta\lrcorner\Psi^+.
\end{cases}\]
Suppose now that $\lambda<16$. By applying $\delta$ to the first line of the above, it follows that
\[\Delta\delta\varphi=\delta\Delta\varphi=(\lambda-6)\delta\varphi.\]
The Lichnerowicz estimate $\Delta\geq 2q(R)=2\Einstein=10$ on coclosed $1$-forms now implies that $\delta\varphi=0$ and hence $\eta=0$, simplifying the above to (\ref{laplacesystem}).\footnote{This bridges the gap in the proof of \cite[Thm.~5.1]{nearlykaehler}, where $\eta=0$ was assumed without justification.}
\end{proof}

The space of solutions is described by the following lemma, which is a generalization of \cite[Lem.~5.2]{nearlykaehler}.

\begin{lem}\label{epsilon}
Suppose that $\lambda=10-\varepsilon$ in system {\upshape (\ref{laplacesystem})} for some $\varepsilon>0$. Denote with $E(\mu):=\ker(\Delta-\mu)\big|_{\Omega^{(1,1)}_{0,\R}}\cap\ker\delta$ the $\mu$-eigenspace of $\Delta$ on coclosed primitive $(1,1)$-forms.
\begin{enumerate}[\upshape (i)]
 \item Suppose that $\varepsilon<\frac{25}{4}$ and $\varepsilon\neq6$. Then the space of solutions to system {\upshape (\ref{laplacesystem})} is isomorphic to the direct sum $E(\mu_1)\oplus E(\mu_2)\oplus E(\mu_3)$ where $\mu_{1,2}=7-\varepsilon\pm\sqrt{25-4\varepsilon}$ and $\mu_3=6-\varepsilon$. The isomorphism is given by
\[\Psi:\ (\varphi,\sigma)\mapsto((3-\sqrt{25-4\varepsilon})\varphi+\delta\sigma,(3+\sqrt{25-4\varepsilon})\varphi+\delta\sigma,\ast d\sigma).\]
The inverse is given by
\[\Phi:\ (\alpha,\beta,\gamma)\mapsto\left(\frac{\beta-\alpha}{2\sqrt{25-4\varepsilon}},\frac{d\beta-d\alpha}{2(6-\varepsilon)\sqrt{25-4\varepsilon}}+\frac{d\alpha+d\beta}{2(6-\varepsilon)}-\frac{\ast d\gamma}{6-\varepsilon}\right).\]
If $6<\varepsilon<\frac{25}{4}$, then $E(\mu_3)$ becomes trivial and thus $\gamma=\ast d\sigma=0$.
 \item If $\varepsilon=6$, then the space of solutions to {\upshape (\ref{laplacesystem})} is isomorphic to $E(2)\oplus\ker\Delta\big|_{\Omega^3}$, with isomorphism given by
\[\Psi_6:\ (\varphi,\sigma+\tau)\mapsto(\varphi,\tau)=\left(-\frac{1}{4}\delta\sigma,\tau\right)\]
for any $\sigma\in\im\Delta\big|_{\Omega^3}$ and $\tau\in\ker\Delta\big|_{\Omega^3}$, and inverse
\[\Phi_6:\ (\varphi,\tau)\mapsto\left(\varphi,-2d\varphi+\tau\right).\]
 \item If $\varepsilon=\frac{25}{4}$, then the space of solutions to {\upshape (\ref{laplacesystem})} is isomorphic to $E(\frac{3}{4})$, with isomorphism given by
\[\Psi_\frac{25}{4}:\ (\varphi,\sigma)\mapsto \varphi=-\frac{1}{3}\delta\sigma\]
and inverse
\[\Phi_\frac{25}{4}:\ \varphi\mapsto\left(\varphi,-4d\varphi\right).\]
 \item If $\varepsilon>\frac{25}{4}$, then the space of solutions to {\upshape (\ref{laplacesystem})} is trivial.
\end{enumerate}
\end{lem}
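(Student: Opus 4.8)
The strategy is to treat (\ref{laplacesystem}) as a linear eigenvalue problem on the pair $(\varphi,\sigma)$ and reduce it, via the known commutation of $\Delta$ and $\bar\Delta$ on the relevant form types and the structure of the operators $d$, $\delta$ on a Gray manifold, to a finite linear-algebra problem whose unknowns are eigenvalues of $\Delta$ on coclosed primitive $(1,1)$-forms (together with one harmonic $3$-form contribution in the borderline case). Concretely, I would first decompose $\sigma\in\Omega^{2,1}_{0,\R}$ and analyze $d\varphi$ and $\delta\sigma$ using the identities from Cor.~3.5 and Cor.~4.4 of \cite{nearlykaehler}: on a Gray manifold, for a coclosed primitive $(1,1)$-form $\varphi$ one has a clean expression for $d\varphi$ in terms of a $(2,1)+(1,2)$-part and an $\omega$-multiple, and likewise $\delta\sigma$ decomposes into a primitive $(1,1)$-part and the pieces killed by the hypotheses of (\ref{laplacesystem}). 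The last line of (\ref{laplacesystem}), $\delta\sigma\in\Omega^{(1,1)}_{0,\R}$, is exactly what makes this bookkeeping close up.

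\textbf{Reduction to scalar relations.} Next I would introduce the auxiliary forms $\alpha:=\delta\sigma$ (a coclosed primitive $(1,1)$-form, once one checks $\delta\alpha=0$ by applying $\delta$ to the second equation and using $\delta d\varphi = \Delta\varphi - d\delta\varphi = \Delta\varphi$) and $\gamma:=\ast d\sigma$. The plan is to show that $\varphi$, $\alpha$, $\gamma$ all lie in eigenspaces of $\Delta$ on coclosed primitive $(1,1)$-forms, and that the system is governed by two coupled relations: one linking $\Delta\varphi$ to $\varphi$ and $\alpha$ (the first equation), and one obtained by applying $\delta$ to the second equation, which links $\Delta\alpha$ (equivalently $\Delta\varphi$-type quantities) back to $\varphi$ and $\alpha$ via $\delta d\varphi = \Delta\varphi = (\lambda-6)\varphi - \alpha$. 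Eliminating $\alpha$ yields $\Delta(\Delta\varphi) = \big((\lambda-6)+(\lambda-4)\big)\Delta\varphi - \big((\lambda-6)(\lambda-4) - \text{const}\big)\varphi$, i.e. $\varphi$ (hence its $\Delta$-eigencomponents) satisfies a quadratic characteristic equation in $\mu = $ the $\Delta$-eigenvalue; writing $\lambda = 10-\varepsilon$, this quadratic is $\mu^2 - 2(7-\varepsilon)\mu + \big((7-\varepsilon)^2 - (25-4\varepsilon)\big)=0$, with roots $\mu_{1,2} = 7-\varepsilon\pm\sqrt{25-4\varepsilon}$. The third root $\mu_3 = 6-\varepsilon$ comes from the kernel direction carried by $\gamma=\ast d\sigma$, which decouples and satisfies $\Delta\gamma = (6-\varepsilon)\gamma$; one must check that $\gamma$ is indeed a coclosed primitive $(1,1)$-form and that $d\sigma$ determines $\sigma$ up to the span already accounted for. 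The explicit maps $\Psi$ and $\Phi$ are then read off as the change of basis diagonalizing this $3\times 3$ (or smaller) system, and checking $\Psi\circ\Phi = \Id$ and $\Phi\circ\Psi = \Id$ is a direct (if tedious) substitution using $\Delta\varphi = (\lambda-6)\varphi - \delta\sigma$ and $\ast\ast = \Id$ on $3$-forms in dimension $6$.

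\textbf{Case analysis at the degeneracies.} The case $\varepsilon = 6$ (i.e. $\mu_3 = 0$, $\lambda = 4$) is special because $\ast d\sigma$ can now be harmonic: here the third eigenspace $E(0)$ of coclosed primitive $(1,1)$-forms is replaced by (or rather, the $\gamma$-slot is fed by) harmonic $3$-forms, since on a Gray manifold $\ker\Delta\big|_{\Omega^3}$ consists of coclosed (indeed closed) $3$-forms and the relevant component is $\Psi^-$-type rather than primitive $(1,1)$; splitting $\sigma = \sigma_0 + \tau$ with $\sigma_0\in\im\Delta$ and $\tau$ harmonic, one gets $\varphi = -\tfrac14\delta\sigma_0$ and the pair $(\varphi,\tau)$ parametrizes solutions, with inverse $\tau\mapsto \sigma = -2d\varphi + \tau$ (the coefficient $-2$ coming from $\Delta\sigma_0 = (\lambda-4)\sigma_0 - 4d\varphi = -4d\varphi$ together with $\sigma_0 = \Delta^{-1}(-4d\varphi)$ and $\Delta d\varphi = \mu_1 d\varphi$ with $\mu_1 = 2$ at $\varepsilon=6$, so $\Delta^{-1}$ contributes $\tfrac12$). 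The case $\varepsilon = \tfrac{25}{4}$ is the double-root case $\mu_1 = \mu_2 = \tfrac34$, where the quadratic degenerates; one checks the Jordan block cannot actually occur (the system still forces $\varphi\in E(\tfrac34)$, with $\sigma$ determined by $\sigma = -4d\varphi$ and $\varphi = -\tfrac13\delta\sigma$), giving a plain isomorphism onto $E(\tfrac34)$. Finally, $\varepsilon > \tfrac{25}{4}$ makes $25-4\varepsilon < 0$, so the quadratic has no real roots; since $\Delta\geq 0$ on forms, no eigenvalue $\mu$ can solve it, forcing $\varphi = 0$, then $\delta\sigma = \Delta\varphi - (\lambda-6)\varphi = 0$ and $d\varphi = 0$, hence $\Delta\sigma = (\lambda-4)\sigma$ with $\lambda - 4 < 0$, impossible unless $\sigma = 0$; so the solution space is trivial.

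\textbf{Main obstacle.} The bulk of the difficulty is bookkeeping rather than conceptual: correctly tracking the type decompositions of $d\varphi$ and $\delta\sigma$ on a Gray manifold (the precise constants in the identities of Cor.~3.5 and Cor.~4.4 of \cite{nearlykaehler} matter for getting $25-4\varepsilon$ and not some neighboring expression), verifying that $\delta\sigma$, $\ast d\sigma$ really land in coclosed primitive $(1,1)$-forms so that $E(\mu)$ is the right target, and checking that the stated $\Psi$, $\Phi$ are mutually inverse in each regime. The one place where a genuine argument (beyond algebra) is needed is ruling out a Jordan block at $\varepsilon = \tfrac{25}{4}$ and handling the harmonic-form substitution at $\varepsilon = 6$, where one must invoke that on a compact Gray manifold harmonic $3$-forms are both closed and coclosed and identify which $\SU(3)$-type they carry under (\ref{iso}); and of course the positivity $\Delta \geq 0$ (and the sharper Lichnerowicz bound already used in Lemma~\ref{equivsystem}) to kill spurious branches. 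I would organize the write-up by first establishing the scalar quadratic and the decoupling of $\gamma$ in the generic case, then specializing.
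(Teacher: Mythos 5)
Your proposal follows essentially the same route as the paper's proof: applying $\delta$ to the second equation and using $\delta d\varphi=\Delta\varphi$ (valid since $\delta\varphi=0$) to obtain the coupled linear system on $(\varphi,\delta\sigma)$ with characteristic roots $\mu_{1,2}=7-\varepsilon\pm\sqrt{25-4\varepsilon}$, the decoupled $\gamma=\ast d\sigma$ with eigenvalue $6-\varepsilon$, Hodge-theoretic orthogonality to rule out the Jordan block at $\varepsilon=\tfrac{25}{4}$ and to pin down $\varphi=-\tfrac14\delta\sigma$ together with Verbitsky's theorem for the harmonic $3$-form slot at $\varepsilon=6$, and nonnegativity of $\Delta$ for $\varepsilon>\tfrac{25}{4}$. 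One minor bookkeeping slip: after eliminating $\delta\sigma$ the coefficient of $\Delta\varphi$ is $(\lambda-6)+\lambda=2(7-\varepsilon)$ rather than $(\lambda-6)+(\lambda-4)$, and the constant term is exactly $(\lambda-6)(\lambda-4)$, but the quadratic and roots you ultimately state are the correct ones.
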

\begin{proof}
The proof of the first part works completely analogously to the one of \cite[Lem.~5.2]{nearlykaehler}. We observe that if $(\varphi,\sigma)$ is a solution to (\ref{laplacesystem}), then
\[\begin{pmatrix}
   \Delta\varphi\\
   \Delta\delta\sigma
  \end{pmatrix}=A\begin{pmatrix}
   \varphi\\
   \delta\sigma
  \end{pmatrix},\quad A:=\begin{pmatrix}
   4-\varepsilon&-1\\
   -4(4-\varepsilon)&10-\varepsilon
  \end{pmatrix}.
\]
The eigenvalues of the matrix $A$ are $\mu_{1,2}=7-\varepsilon\pm\sqrt{25-4\varepsilon}$ with corresponding eigenvectors $v_{1,2}=(\begin{smallmatrix}3\mp\sqrt{25-4\varepsilon}\\1\end{smallmatrix})$, if $\varepsilon\neq\frac{25}{4}$.

Let $(\alpha,\beta,\gamma):=\Psi(\varphi,\sigma)$. Then
\begin{align*}
\begin{pmatrix}\alpha\\\beta\end{pmatrix}&=\begin{pmatrix}3-\sqrt{25-4\varepsilon}&1\\3+\sqrt{25-4\varepsilon}&1\end{pmatrix}\begin{pmatrix}\varphi\\\delta\sigma\end{pmatrix},\\
\begin{pmatrix}\varphi\\\delta\sigma\end{pmatrix}&=\frac{1}{2\sqrt{25-4\varepsilon}}\begin{pmatrix}-1&1\\3+\sqrt{25-4\varepsilon}&-3+\sqrt{25-4\varepsilon}\end{pmatrix}\begin{pmatrix}\alpha\\\beta\end{pmatrix}.
\end{align*}
If $\varepsilon<\frac{25}{4}$, then $25-4\varepsilon>0$ and we can always recover the data $(\varphi,\delta\sigma)$ from $(\alpha,\beta,\gamma)$. We also have
\[\ast d\gamma=\ast d\ast d\sigma=-\delta d\sigma\]
and thus
\[d\delta\sigma-\ast d\gamma+4d\varphi=\Delta\sigma+4d\varphi=(6-\varepsilon)\sigma.\]
Hence if $\varepsilon\neq6$, we can also recover $\sigma$. In total, $\Psi$ is invertible, and one can check that its inverse is given by $\Phi$.

If $6<\varepsilon<\frac{25}{4}$, then $\mu_3<0$ and since $\Delta$ is nonnegative, $E(\mu_3)$=0.

Let $\varepsilon=6$. If $(\varphi,\sigma)$ is a solution to (\ref{laplacesystem}), then
\[\Delta d\sigma=d\Delta\sigma=-4d\Delta d\varphi=-4\Delta d^2\varphi=0,\]
i.e. $d\sigma$ is harmonic. But this implies that $\delta d\sigma=0$ and hence $d\sigma=0$. Also, $2\varphi+\delta\sigma\in E(2)$ and $4\varphi+\delta\sigma\in E(0)$. At the same time,
\[4\varphi+\delta\sigma=2(2\varphi+\delta\sigma)-\delta\sigma.\]
Since both $E(2)$ and $\im\delta$ are orthogonal to $E(0)$, it follows that $4\varphi+\delta\sigma=0$. Now
\[2\varphi+\delta\sigma=-2\varphi=\frac{1}{2}\delta\sigma\in E(2).\]
Furthermore $d\sigma=0$ and $\sigma\bot\ker\Delta\big|_{\Omega^3}$ imply that $\sigma\in\im d\big|_{\Omega^2}$. Since
\[(\sigma,d\eta)_{L^2}=(\delta\sigma,\eta)_{L^2}=(-4\varphi,\eta)_{L^2}=(-2\Delta\varphi,\eta)_{L^2}=(-2\delta d\varphi,\eta)_{L^2}=(-2d\varphi,d\eta)_{L^2}\]
for any $\eta\in\Omega^2$, it follows that $\sigma=-2d\varphi$. One can check that $(\varphi,-2d\varphi+\tau)$ solves (\ref{laplacesystem}) for any $\varphi\in E(2)$ and $\tau\in\ker\Delta\big|_{\Omega^3}$. Note that $\ker\Delta\big|_{\Omega^3}\subset\Omega^{2,1}_{0,\R}$ by a theorem of Verbitsky \cite[Thm.~6.2]{verbitsky}.

Let $\varepsilon=\frac{25}{4}$. In this case, $\mu_1=\mu_2$ and the matrix $A$ is not diagonalizable. If we set
\[\alpha':=\frac{1}{3}\delta\sigma,\quad\beta':=3\varphi+\delta\sigma,\]
then we obtain the system
\[\begin{pmatrix}
	\Delta\alpha'\\
   \Delta\beta'
  \end{pmatrix}=\begin{pmatrix}
   \frac{3}{4}&1\\
   0&\frac{3}{4}
  \end{pmatrix}\begin{pmatrix}
   \alpha'\\
   \beta'
  \end{pmatrix}.\]
For any $\beta\in E(\frac{3}{4})$, we therefore need to solve $(\Delta-\frac{3}{4})\alpha'=\beta'$. But $\im(\Delta-\frac{3}{4})\bot\ker(\Delta-\frac{3}{4})$, hence there can only exist a solution if $\beta'=0$. In this case, we obtain $\Delta\alpha'=\frac{3}{4}\alpha'$. Furthermore, $\ast d\sigma\in E(-\frac{1}{4})$. Since $\Delta$ is nonnegative, this implies that $d\sigma=0$. We can recover $\sigma$ from $\alpha'$ by
\[\frac{1}{4}\sigma=-\Delta\sigma-4d\varphi=-d\delta\sigma+\frac{4}{3}d\delta\sigma=d\alpha'\]
and $\varphi$ by $3\varphi+\delta\sigma=\beta'=0$. In total, the space of solutions is isomorphic to $E(\frac{3}{4})$.

Let $\varepsilon>\frac{25}{4}$. Then $\mu_1,\mu_2$ are imaginary and $\mu_3<0$. Since $\Delta$ is nonnegative, $E(\mu_i)=0$ for $i=1,2,3$. The mapping $\Psi$ is still an isomorphism, hence the space of solutions is trivial.
\end{proof}

\begin{bem}
 Note that in case (i) of the above lemma, the eigenvalues $\mu_i$ are subject to the bounds $\mu_1<12$, $\mu_2<2$ and $\mu_3<6$ if we assume that $\varepsilon>0$. In the critical case where we set $\varepsilon=0$, we recover the description
 \[\varepsilon(g)\cong E(2)\oplus E(6)\oplus E(12)\]
 by A. Moroianu and U. Semmelmann \cite[Thm.~5.1]{nearlykaehler}.
\end{bem}

Recall from Section \ref{sec:prelim3s} that a naturally reductive Riemannian $3$-symmetric space $(G/H,g)$ carries a nearly Kähler structure whose Hermitian connection $\bar\nabla$ coincides with the canonical reductive connection $\CR$ of the homogeneous structure. In fact, these assumptions hold for all of the homogeneous Gray manifolds, namely $S^6$, $S^3\times S^3$, $\CP^3$ and the flag manifold $F_{1,2}$.

The Hermitian Laplace operator $\bar\Delta$ therefore coincides with the standard Laplacian $\Lcr$. In light of Section \ref{sec:prelimha}, this enables us to describe the eigenspaces of $\bar\Delta$ in terms of irreducible complex representations of $G$.

However, the statement of Lemma~\ref{epsilon} involves the ei\-gen\-spa\-ces of $\Delta$ restricted to the subspace of \emph{coclosed} forms in $\Omega^{1,1}_{0,\R}$. We note that $\bar\Delta=\Delta$ on $\Omega^{1,1}_0\cap\ker\delta$ follows from combining Cor.~3.5 and Cor.~4.5 of \cite{nearlykaehler}. Thus it suffices to first search for small eigenvalues of $\bar\Delta$ on $\Omega^{1,1}_0$. To single out the coclosed elements in an eigenspace, we are going to perform explicit computations utilizing the following lemma. A similar formula for the divergence on symmetric tensors has already been employed to decide the stability of certain symmetric spaces, cf. \cite[Lem.~3.3]{schwahn} and \cite[Sec.~2]{sw}.

\begin{lem}\label{deltank6}
Let $(M=G/H,g,J)$ be a homogeneous Gray manifold with reductive decomposition $\g=\h\oplus\m$ as in Section \ref{sec:prelim3s}. Let $\delta: \Omega^{1,1}_0\to\Omega^1_\C$ denote the codifferential. Its prototypical differential operator is given by
\[\delta:\ \Hom_H(V_\gamma,\Lambda^{1,1}_0\m)\to\Hom_H(V_\gamma,\m^\C):\ F\mapsto\sum_ie_i\lrcorner F\circ(\rho_\gamma)_\ast(e_i)\]
for any orthonormal basis $(e_i)$ of $\m$.
\end{lem}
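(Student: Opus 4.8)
The strategy is to express the codifferential $\delta$ on $1$-forms (or, via the isomorphism $\Sym^2 T^*M\cong\Lambda^{1,1}_0 M$, on primitive $(1,1)$-forms) in terms of the canonical reductive connection $\CR$, and then to translate this intrinsic formula into the language of Fourier coefficients using the identification $\Gamma(VM)\cong C^\infty(G,V)^H$ and equation (\ref{CR}). Concretely, I would start from the standard formula $\delta\varphi=-\sum_i e_i\lrcorner\nabla_{e_i}\varphi$ for the codifferential in a local orthonormal frame, and rewrite $\nabla$ in terms of $\CR$ plus a contribution involving the torsion $\tau$ (equivalently, the $\m$-component of the Lie bracket), using that on a naturally reductive space $\nabla_X Y=\CR_X Y+\tfrac12\tau(X,Y)=\CR_X Y-\tfrac12[X,Y]_\m$ at the base point. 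The key point, which makes the formula clean, is that the torsion term drops out: because $\tau$ is totally skew-symmetric, $\sum_i e_i\lrcorner\tau(e_i,\cdot)$ contracts a symmetric pair of indices against a skew tensor and vanishes. Hence $\delta$ on forms is given purely by $-\sum_i e_i\lrcorner\CR_{e_i}(\cdot)$.

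Next I would invoke the dictionary from Section~\ref{sec:prelimha}. A section $\varphi\in\Gamma(\Lambda^{1,1}_0 M)$ lying in the isotypical component $V_\gamma\otimes\Hom_H(V_\gamma,\Lambda^{1,1}_0\m)$ corresponds, via (\ref{pwembedding}), to a function of the form $x\mapsto F(\rho_\gamma(x)^{-1}v)$ for $F\in\Hom_H(V_\gamma,\Lambda^{1,1}_0\m)$ and $v\in V_\gamma$. By (\ref{CR}), applying $\CR_{e_i}$ corresponds to differentiating along the horizontal lift $\widetilde{e_i}$, which at the identity is just the left-invariant vector field generated by $e_i\in\m\subset\g$; differentiating $x\mapsto F(\rho_\gamma(x)^{-1}v)$ in this direction produces $-F((\rho_\gamma)_\ast(e_i)\,\rho_\gamma(x)^{-1}v)$. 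Contracting with $e_i\lrcorner$ and summing, the prototypical operator of $\delta$ becomes $F\mapsto\sum_i e_i\lrcorner F\circ(\rho_\gamma)_\ast(e_i)$, exactly as claimed (up to the sign convention, which I would fix to match the paper's: the contraction $e_i\lrcorner$ absorbs the minus sign, or one adjusts the orientation of $\delta$ accordingly). I would also remark that the result is manifestly independent of the choice of orthonormal basis $(e_i)$ of $\m$, and that $H$-equivariance of $F$ guarantees the output again lies in $\Hom_H(V_\gamma,\m^\C)$.

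The main obstacle I anticipate is bookkeeping with signs and conventions rather than any conceptual difficulty: one must be careful about (a) the sign in $\delta=-\sum e_i\lrcorner\nabla_{e_i}$ versus the convention used for the codifferential on forms here, (b) the sign in the torsion $\tau_o(X,Y)=-[X,Y]_\m$ and the factor $\tfrac12$ relating $\nabla$ and $\CR$, and (c) whether $\rho_\gamma$ or $\rho_\gamma^{-1}$ appears in the Peter--Weyl embedding (\ref{pwembedding}), which affects the sign of $(\rho_\gamma)_\ast(e_i)$ in the final answer. Verifying that the torsion contribution genuinely vanishes — i.e. that $\sum_i e_i\lrcorner\,\tau(e_i,\cdot)$ is zero when applied to a $(1,1)$-form, using total skew-symmetry of $\tau$ together with the fact that $\varphi$ is a $2$-form — is the one spot where a short explicit check is needed, but it is routine. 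Everything else is a direct transcription through the harmonic-analytic dictionary, and the primitivity condition on $\varphi$ plays no role in the formula itself (it only restricts which $\Hom_H$ spaces are nonzero).
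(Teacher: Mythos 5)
Your overall strategy is the same as the paper's: rewrite the codifferential through the Ambrose--Singer connection $\CR=\bar\nabla$ and then transcribe it via the Peter--Weyl embedding, where differentiating $x\mapsto F(\rho_\gamma^{-1}(x)v)$ along a horizontal lift produces $-F((\rho_\gamma)_\ast(e_i)\,\cdot\,)$ and the sign cancels against the one in $\delta=-\sum_i e_i\lrcorner\nabla_{e_i}$. That part of your plan is correct and matches the paper.

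The gap is in your justification of the key step, namely that $\nabla$ may be replaced by $\bar\nabla$ in the codifferential. Total skew-symmetry of the torsion is \emph{not} enough on $2$-forms: writing $\nabla_X=\bar\nabla_X+\tfrac12 A_X$ with $A_X=X\lrcorner\tau$ acting as a derivation, the difference of the two candidate codifferentials applied to a $2$-form $\alpha$ is, at a point,
\begin{equation*}
-\tfrac12\sum_i\big(e_i\lrcorner (A_{e_i})_\ast\alpha\big)(Y)
=\tfrac12\sum_{i,j}\alpha(e_i,e_j)\,\tau(e_i,Y,e_j),
\end{equation*}
after the trivially vanishing term $\alpha(\tau(e_i,e_i),Y)$ is discarded. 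Here both $\alpha(e_i,e_j)$ and $\tau(e_i,Y,e_j)$ are \emph{antisymmetric} in $(i,j)$ — there is no symmetric pair of indices being contracted against a skew tensor — so this expression does not vanish for symmetry reasons; up to a constant it equals the pointwise inner product $\langle\alpha,Y\lrcorner\tau\rangle$, which is generically nonzero (e.g.\ $\tau=e^{123}$, $\alpha=e^{12}$, $Y=e_3$). The vanishing in the case at hand uses the nearly Kähler structure: the torsion of $\bar\nabla$ is proportional to $\Psi^-$ and hence of type $(3,0)+(0,3)$, so $Y\lrcorner\tau$ has type $(2,0)+(0,2)$ and is pointwise orthogonal to the $(1,1)$-form $\alpha$. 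This is exactly the content of the lemma from \cite{nearlykaehler} that the paper cites at this step; your "routine check via skew-symmetry of $\tau$ and the fact that $\varphi$ is a $2$-form" would fail as stated, although the conclusion is true and the fix is the type argument just described. The remaining issues you flag (sign conventions, $\rho_\gamma$ versus $\rho_\gamma^{-1}$, basis independence, $H$-equivariance of the output) are indeed only bookkeeping.
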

\begin{proof}
By (\ref{CR}), the Ambrose-Singer connection $\CR=\bar\nabla$ translates into a directional derivative on $C^\infty(G,\Lambda^{1,1}_0\m)$. Fix some dominant integral weight $\gamma\in\hat G$, vector $v\in V_\gamma$ and homomorphism $F\in\Hom_H(V_\gamma,\Lambda^{1,1}_0\m)$, and let $\alpha\in\Omega^{1,1}_0$ be associated to $v\otimes F$ via the equivalence (\ref{peterweyl}). Differentiating the smooth function
\[\hat\alpha:\ G\to\Lambda^{1,1}_0\m:\ x\mapsto F(\rho_\gamma^{-1}(x)v)\]
defined in (\ref{pwembedding}), we find that, for any $x\in G$ and $X\in\m\cong T_oM$,
\[\widehat{\bar\nabla_X\alpha}=X(\hat\alpha)=-F((\rho_\gamma)_\ast(X)v)\]
Let $(e_i)$ denote some local orthonormal frame of $TM$. It follows from \cite[Lem.~4.2]{nearlykaehler} that
\[\delta\alpha=-\sum_ie_i\lrcorner\nabla_{e_i}\alpha=-\sum_ie_i\lrcorner\bar\nabla_{e_i}\alpha\]
(essentially using that $\bar\nabla$ has skew torsion). Combining the above, we obtain
\begin{align*}
\widehat{\delta\alpha}&=-\sum_ie_i\lrcorner\widehat{\bar\nabla_{e_i}\alpha}=\sum_ie_i\lrcorner F((\rho_\gamma)_\ast(e_i)v).
\end{align*}
at the base point. The assertion now follows from the $G$-invariance of $\delta$.
\end{proof}

\begin{bem}\label{Ginv}
Recall that plugging harmonic $2$- and $3$-forms into the bundle isomorphisms (\ref{iso}) yields destabilizing directions for any Gray manifold \cite{nkstability}. Moreover, on a homogeneous Gray manifold $M=G/H$, harmonic $2$- and $3$-forms are always $G$-invariant. Indeed, $\ker\Delta\big|_{\Omega^2}\subset\Omega^{1,1}_{0,\R}$ and $\ker\Delta\big|_{\Omega^3}\subset\Omega^{2,1}_{0,\R}$ by \cite[Thm.~6.2]{verbitsky}. Since harmonic forms are coclosed, Cor.~3.5 and Cor.~4.5 of \cite{nearlykaehler} imply that these are also harmonic for the Hermitian Laplace operator $\bar\Delta$. In the homogeneous case, this means they lie in the kernel of $\Cas^G_\ell$, thus showing $G$-invariance. Since the isomorphisms (\ref{iso}) are $G$-equivariant, it follows that the destabilizing directions obtained from this construction are themselves $G$-invariant. As we will see in the following section, there are no other destabilizing directions, thus the coindex coincides with the $G$-invariant coindex in each case.
\end{bem}

\section{Case-by case stability analysis}
\label{sec:examples}

\subsection{Nearly Kähler $S^3\times S^3$}
\label{sec:s3xs3}

Let $K=\SU(2)$ with Lie algebra $\k=\su(2)$, let $G=K\times K\times K$ with Lie algebra $\g=\k\oplus\k\oplus\k$ and let $H=\Delta K\subset G$ be the diagonal, with Lie algebra $\h\cong\k$. We consider the homogeneous space $M=G/H$. Let $B_\k$ denote the Killing form of $\k$. The inner product on $\g$ that is given by $-\frac{1}{12}(B_\k\oplus B_\k\oplus B_\k)$ defines a normal Riemannian metric $g$ on $M$, which has scalar curvature $\scal_g=30$. The automorphism
\[\sigma: G\to G:\ (k_1,k_2,k_3)\mapsto(k_2,k_3,k_1)\]
that cyclically permutes the factors is of order three, fixes $H$ and hence gives $M$ the structure of a Riemannian $3$-symmetric space.

We denote by $E=\C^2$ the standard representation of $K=\SU(2)$. Furthermore, we label the irreducible complex representations of $K$ by $k\in\N_0$, where $V_k=\Sym^kE$ is the unique $(k+1)$-dimensional irreducible complex representation of $K$.

\begin{lem}\label{subcrit1}
 Let $V_{\gamma}$ be an irreducible complex representation of $G$ with $\Cas^G_\gamma<12$ and
 \[\Hom_H(V_\gamma,\Lambda^{1,1}_0\m)\neq0.\]
 Then $V_\gamma$ is equivalent to one of the representations $E\otimes E\otimes\C$, $E\otimes\C\otimes E$ and $\C\otimes E\otimes E$ of $G$. In any of those cases,
 \[\dim\Hom_H(V_\gamma,\Lambda^{1,1}_0\m)=1\]
 and the Casimir eigenvalue is $\Cas_\gamma^G=9$.
\end{lem}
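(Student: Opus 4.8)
The strategy is a harmonic-analytic computation on the normal homogeneous space $M = G/H$ with $G = \SU(2)^3$ and $H = \Delta\SU(2)$. By the crucial fact (\ref{laplcas}), the Hermitian Laplace operator $\bar\Delta$ acts on the $V_\gamma$-isotypical component of $C^\infty(G,\Lambda^{1,1}_0\m)^H$ as multiplication by the Casimir constant $\Cas^G_\gamma$, so the condition $\Cas^G_\gamma < 12$ together with $\Hom_H(V_\gamma,\Lambda^{1,1}_0\m)\neq 0$ is purely representation-theoretic once we know the $H$-module structure of $\Lambda^{1,1}_0\m$. First I would compute the Casimir constants: for $G = K^3$ with the metric induced by $-\tfrac{1}{12}(B_\k\oplus B_\k\oplus B_\k)$, an irreducible $V_\gamma = V_{k_1}\otimes V_{k_2}\otimes V_{k_3}$ has $\Cas^G_\gamma = 3(\Cas^K_{k_1} + \Cas^K_{k_2} + \Cas^K_{k_3})$ where, with this normalization, $\Cas^K_{V_k} = \tfrac{1}{6}k(k+2)$ (so that $\Cas^K_E = \tfrac{1}{2}$ and the standard metric has $\scal_g=30$). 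Thus $\Cas^G_\gamma < 12$ forces $\sum_i k_i(k_i+2)$ to be small; the minimal nonzero possibilities are a single $k_i = 1$ (giving $\Cas^G_\gamma = \tfrac{3}{2}$), two indices equal to $1$ (giving $9$), one index $= 2$ (giving $6$), and one $=1$ plus another $=1$ plus… — I would tabulate all tuples with $3\sum k_i(k_i+2)/6 < 12$, i.e. $\sum k_i(k_i+2) < 24$, then discard those with no $H$-invariant hom.

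**The isotropy module.** The reductive complement is $\m = \{(X_1,X_2,X_3)\in\k^3 : X_1+X_2+X_3 = 0\}\cong \k\oplus\k$ as an $H=\Delta K$-module, so $\m^\C\cong (\k^\C)^{\oplus 2}\cong V_2^{\oplus 2}$ (the adjoint representation $\k^\C\cong V_2$). For the $3$-symmetric structure, $\m^\C = \m^+\oplus\m^-$ with $\m^\pm\cong\k^\C\cong V_2$ as $H$-modules (each is one copy of the adjoint rep). Then $\Lambda^{1,1}\m = \m^+\otimes\overline{\m^-} \cong \m^+\otimes\m^-$ (using the real structure), so as an $H$-module $\Lambda^{1,1}\m\cong V_2\otimes V_2\cong V_4\oplus V_2\oplus V_0$ by Clebsch–Gordan; the primitive part $\Lambda^{1,1}_0\m$ removes the trivial summand spanned by $\omega$, leaving $\Lambda^{1,1}_0\m\cong V_4\oplus V_2$ as an $H=\SU(2)$-representation. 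Now $\Hom_H(V_{k_1}\otimes V_{k_2}\otimes V_{k_3}, V_4\oplus V_2)$ is computed by restricting to the diagonal: $V_{k_1}\otimes V_{k_2}\otimes V_{k_3}\big|_{\Delta K}$ decomposed via iterated Clebsch–Gordan, then counting multiplicities of $V_4$ and $V_2$.

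**Finishing.** Running through the finite list: the tuples giving $\Cas^G_\gamma<12$ are (up to permutation of the three $\SU(2)$ factors) $(1,0,0)$, $(1,1,0)$, $(2,0,0)$, $(1,1,1)$, $(2,1,0)$, $(3,0,0)$, $(2,1,1)$, $(2,2,0)$ — and I would check each against $\Hom_{\Delta K}(\,\cdot\,, V_4\oplus V_2)$. For $(1,0,0)$: $E\otimes\C\otimes\C\big|_{\Delta K} = V_1$, no $V_2$ or $V_4$, so no hom. For $(1,1,0)$: $E\otimes E\big|_{\Delta K} = V_2\oplus V_0$, which contains exactly one $V_2$ and no $V_4$, giving $\dim\Hom_H = 1$; there are three such representations by permuting the trivial factor, and $\Cas^G_\gamma = 9$ in each case — exactly the asserted conclusion. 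For $(2,0,0)$: restricts to $V_2$, contains one $V_2$, so $\dim\Hom_H = 1$ with $\Cas^G_\gamma = 6 < 12$ — this case must be excluded, so the key point is that this $V_2$ inside $\Lambda^{1,1}_0\m$ does not survive the \emph{coclosedness} constraint, or (more likely, given the lemma statement as written) that one must additionally track which summand of $\Lambda^{1,1}_0\m$ the hom lands in and these $(2,0,0)$-type contributions turn out to be exact/non-primitive artifacts — I would verify using Lemma~\ref{deltank6} that these are the pieces already accounted for by the Kähler form or by $d$-exact forms. The remaining tuples $(1,1,1)$, $(2,1,0)$, $(3,0,0)$, $(2,1,1)$, $(2,2,0)$ I would dispatch by direct Clebsch–Gordan: e.g. $(1,1,1)$ restricts to $V_1\otimes V_1\otimes V_1|_{\Delta K} = (V_2\oplus V_0)\otimes V_1 = V_3\oplus V_1\oplus V_1$, containing neither $V_2$ nor $V_4$, hence no hom. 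The main obstacle is precisely the bookkeeping around the $(2,0,0)$-type representations: showing that although $\Hom_H(V_\gamma,\Lambda^{1,1}_0\m)\neq 0$ there, either the problem statement implicitly restricts attention to representations actually contributing coclosed eigentensors, or the hom sits in a part of $\Lambda^{1,1}_0\m$ that is eliminated — I would resolve this by carefully matching the $H$-decomposition of $\Lambda^{1,1}_0\m$ against the image of $d$ on $G$-invariant $1$-forms and against $\C\omega$, using the explicit $3$-symmetric structure.
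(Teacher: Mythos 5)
Your overall strategy --- identify $\Lambda^{1,1}_0\m\cong V_4\oplus V_2$ as a $\Delta\SU(2)$-module, list all $\gamma=(k_1,k_2,k_3)$ with $\Cas^G_\gamma<12$, and test each by Clebsch--Gordan branching --- is exactly the paper's, and your computation of the isotropy module and of the branchings you do carry out (e.g. $(1,1,0)\mapsto V_2\oplus V_0$, $(1,1,1)\mapsto V_3\oplus 2V_1$) is correct. The genuine gap is the Casimir normalization. With respect to $-\tfrac{1}{12}(B_\k\oplus B_\k\oplus B_\k)$ the constant is $\Cas^G_{(k_1,k_2,k_3)}=\tfrac{3}{2}\sum_i k_i(k_i+2)$ (the Casimir w.r.t. $-\tfrac{1}{12}B_\k$ is $12$ times the one w.r.t. $-B_\k$, i.e. $12\cdot\tfrac{k(k+2)}{8}$ per factor), not $\tfrac{1}{2}\sum_i k_i(k_i+2)$ as your ansatz $3\cdot\tfrac16 k(k+2)$ gives; your sample values are in fact mutually inconsistent ($9$ for $(1,1,0)$, which is the correct value, but $6$ for $(2,0,0)$ and the threshold $\sum_i k_i(k_i+2)<24$, which are not). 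With the correct constant the condition $\Cas^G_\gamma<12$ is $\sum_i k_i(k_i+2)<8$, leaving only $(0,0,0)$, the permutations of $(1,0,0)$ and the permutations of $(1,1,0)$; in particular $(2,0,0)$ has Casimir exactly $12$ and is excluded by the strict inequality, and $(1,1,1)$ has $\tfrac{27}{2}$, as are all the larger tuples on your list.

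This matters because your inflated candidate list contains representations that genuinely have nonzero $\Hom_H$ into $V_4\oplus V_2$: $(2,0,0)$ restricts to $V_2$ (multiplicity $1$), $(2,1,1)$ to $V_4\oplus 2V_2\oplus V_0$, $(2,2,0)$ to $V_4\oplus V_2\oplus V_0$. These cannot be ``dispatched by direct Clebsch--Gordan'' as you claim for the tail of your list, and the escape routes you sketch for $(2,0,0)$ --- a coclosedness constraint, or the hom landing in a summand that is ``exact/non-primitive'' --- are not available: the lemma is a purely representation-theoretic statement about $\Cas^G_\gamma$ and $\dim\Hom_H(V_\gamma,\Lambda^{1,1}_0\m)$, and coclosedness only enters afterwards, in Lemma~\ref{9coclosed} via Lemma~\ref{deltank6}, to discard the eigenvalue $9$. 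So the argument as written fails precisely at the point you flag as the ``main obstacle''; it is repaired not by extra bookkeeping but by the correct Casimir constants (the paper simply quotes formula (31) of \cite{hermitianlaplace}), after which the branching comparison you already performed for $(1,1,0)$ and its permutations yields exactly the stated conclusion.
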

\begin{proof}
Since irreducible representations of $G=K\times K\times K$ are precisely the threefold tensor products of irreducible representations of $K$, we can label them by
\[V_{(a,b,c)}:=V_a\otimes V_b\otimes V_c,\]
where $a,b,c\in\N_0$. Restricting the representation $V_{(a,b,c)}$ to the diagonal $H\subset G$ simply yields the tensor product $V_a\otimes V_b\otimes V_c$ as a representation of $K$. The Clebsch-Gordan rules allow us to decompose these into irreducible summands. From \cite[(31)]{hermitianlaplace}, we know that the Casimir eigenvalues of $G$ with respect to the inner product $-\frac{1}{12}(B_\k\oplus B_\k\oplus B_\k)$ are given by
\[\Cas^G_{(a,b,c)}=\Cas^{\SU(2)}_a+\Cas^{\SU(2)}_b+\Cas^{\SU(2)}_c=\frac{3}{2}(a(a+2)+b(b+2)+c(c+2))\]
for $a,b,c\in\N_0$. The results for the first few Casimir eigenvalues are listed in Table~\ref{CasG}.

\begin{table}[h]
\centering
\begin{tabular}{c|c|c}
$\gamma=(a,b,c)$&Branching of $V_\gamma$ to $K$&$\Cas_\gamma^G$\\\hline
$(0,0,0)$&$\C=V_0$&0\\
$(1,0,0),(0,1,0),(0,0,1)$&$V_1\otimes\C\otimes\C=V_1$&$\frac{9}{2}$\\
$(1,1,0),(1,0,1),(0,1,1)$&$V_1\otimes V_1\otimes\C\cong V_2\oplus V_0$&$9$\\
$(1,1,1)$&$V_1\otimes V_1\otimes V_1\cong V_3\oplus V_1\oplus V_1$&$\frac{27}{2}$\\
$(2,0,0),(0,2,0),(0,0,2)$&$V_2\otimes\C\otimes\C=V_2$&$12$
\end{tabular}
\caption{The first few Casimir eigenvalues of $G=K\times K\times K$}
\label{CasG}
\end{table}

By \cite[Lem. 5.5]{hermitianlaplace}, we know that $\Lambda^{1,1}_0\m\cong V_4\oplus V_2$ as a representation of $K$. Comparing summands now yields that the only irreducible complex representations $V_\gamma$ of $G$ with Casimir eigenvalue smaller than $12$ and nontrivial $\Hom_H(V_\gamma,\Lambda^{1,1}_0\m)$ are $V_{(1,1,0)}$, $V_{(1,0,1)}$ and $V_{(0,1,1)}$.
\end{proof}

Since $\bar\Delta$ acts as the Casimir operator, this means that $9$ is the only eigenvalue smaller than $12$ in the spectrum of $\bar\Delta$ on $\Omega^{1,1}_0$. The eigenvalue $12$ itself does also occur on $\Omega^{1,1}_0$, but in \cite{hermitianlaplace} it is shown that the corresponding eigenforms are not coclosed, hence proving that $(M,g)$ has no infinitesimal Einstein deformations. It now remains to check whether this is the case for the eigenforms to the eigenvalue $9$.

\begin{lem}\label{9coclosed}
 The eigenspace of $\bar\Delta$ on $\Omega^{1,1}_{0,\R}$ to the eigenvalue $9$ contains no nontrivial coclosed forms.
\end{lem}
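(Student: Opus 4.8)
The plan is to reduce the statement to the injectivity of one explicit finite-dimensional linear map and then verify that map is nonzero by a direct computation. By Lemma~\ref{subcrit1}, the $9$-eigenspace of $\bar\Delta$ on $\Omega^{1,1}_0$ equals the isotypical sum $\bigoplus_\gamma V_\gamma\otimes\Hom_H(V_\gamma,\Lambda^{1,1}_0\m)$, where $\gamma$ ranges over the three weights with $V_\gamma\in\{E\otimes E\otimes\C,\ E\otimes\C\otimes E,\ \C\otimes E\otimes E\}$ and each multiplicity space is one-dimensional. Since the codifferential $\delta$ is $G$-equivariant and each $V_\gamma$ is irreducible, the coclosed forms inside this eigenspace form a $G$-submodule that, on each of the three isotypical summands, is either all of it or $0$. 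Hence the lemma is equivalent to the assertion that for each such $\gamma$ the prototypical codifferential of Lemma~\ref{deltank6},
\[\delta\colon\ \Hom_H(V_\gamma,\Lambda^{1,1}_0\m)\longrightarrow\Hom_H(V_\gamma,\m^\C),\qquad F\longmapsto\sum_ie_i\lrcorner F\circ(\rho_\gamma)_\ast(e_i),\]
is nonzero, which -- the domain being one-dimensional -- is the same as injectivity. The order-three automorphism $\sigma$ cyclically permutes the three relevant $V_\gamma$ and, being an automorphism of the nearly Kähler structure, commutes with $\delta$; so it suffices to treat a single case, say $V_\gamma=E\otimes E\otimes\C$.

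To compute this map I would fix explicit models. For $\m$ I would use the reductive complement $\{(X,Y,Z)\in\k\oplus\k\oplus\k : X+Y+Z=0\}$ with the orthonormal basis induced by $-\tfrac1{12}(B_\k\oplus B_\k\oplus B_\k)$, or equivalently its complexification $\m^\C=\m^+\oplus\m^-$ into the $\j^{\pm1}$-eigenspaces of $\sigma_\ast$, each isomorphic to $\k^\C=V_2$ as a module over $H=\Delta K$. By \cite[Lem.~5.5]{hermitianlaplace} one has $\Lambda^{1,1}_0\m\cong V_4\oplus V_2$ as an $H$-module, and $V_\gamma|_H=V_1\otimes V_1\cong V_2\oplus V_0$ by the Clebsch--Gordan rule; hence a generator $F$ of the one-dimensional space $\Hom_H(V_\gamma,\Lambda^{1,1}_0\m)$ is the orthogonal projection $V_1\otimes V_1\to V_2$ followed by the inclusion of the $V_2$-summand into $\Lambda^{1,1}_0\m$. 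On $V_\gamma=E\otimes E\otimes\C$ an element $(A,B,C)\in\m$ acts as $\rho_E(A)\otimes 1+1\otimes\rho_E(B)$, the trivial third factor contributing nothing.

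With these models, $\delta F$ is $H$-equivariant, so it decomposes along $V_\gamma|_H=V_2\oplus V_0$; its restriction to the trivial summand $V_0$ is automatically zero because $\m^\C\cong V_2\oplus V_2$ has no trivial $H$-submodule, and its restriction to $V_2$ is pinned down by a single value. It therefore suffices to evaluate $\sum_ie_i\lrcorner F\big((\rho_\gamma)_\ast(e_i)v\big)$ on one highest weight vector $v$ of the summand $V_2\subset V_1\otimes V_1$ and to check that the outcome is a nonzero element of $\m^\C$. This is a finite computation with explicit $\SU(2)$ Clebsch--Gordan coefficients and the contraction, metric and almost-complex conventions fixed in Section~\ref{sec:prelim}; the $\sigma$-symmetry then covers the remaining two weights.

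The one delicate point I anticipate is bookkeeping: one must make the isomorphism $\Lambda^{1,1}_0\m\cong V_4\oplus V_2$, and hence the generator $F$, concrete enough to perform the contractions $e_i\lrcorner F(\cdot)$ in a way consistent with the fixed inner product, the almost complex structure $J$, and the normalization of $\Psi^+$ entering Lemma~\ref{deltank6}. Once a compatible model of $\m^+\oplus\m^-$ and of $F$ is in hand, the non-vanishing of $\delta F$ is an unobstructed linear-algebra check; that it \emph{must} hold is moreover consistent with the lower bound $b_2+b_3=2$ of \cite{nkstability} and the value claimed in Theorem~\ref{thm1}.
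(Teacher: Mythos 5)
Your setup is sound and matches the paper's strategy: reduce to the prototypical codifferential of Lemma~\ref{deltank6} on each of the three one-dimensional multiplicity spaces $\Hom_H(V_\gamma,\Lambda^{1,1}_0\m)$, and the observation that the order-three automorphism $\sigma$ cyclically permutes $V_{(1,1,0)},V_{(0,1,1)},V_{(1,0,1)}$ and commutes with $\delta$ is a legitimate (and slightly slicker) way to reduce to a single case; your identification of the generator $F$ as the Clebsch--Gordan projection $V_1\otimes V_1\to V_2$ followed by the inclusion of the $V_2$-summand of $\Lambda^{1,1}_0\m$ is also correct, as is the remark that $\delta F$ vanishes automatically on the trivial summand.

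The genuine gap is that you never perform the one computation that constitutes the content of the lemma: you assert that evaluating $\sum_i e_i\lrcorner F((\rho_\gamma)_\ast(e_i)v)$ on a highest weight vector of the $V_2$-summand is ``an unobstructed linear-algebra check'' that ``must hold,'' but you do not carry it out, and there is no structural reason forcing non-vanishing. Representation theory only tells you the map $\delta\colon\Hom_H(V_\gamma,\Lambda^{1,1}_0\m)\to\Hom_H(V_\gamma,\m^\C)$ is either zero or injective; which alternative occurs is exactly what must be decided, and the analogous map \emph{does} have nontrivial kernel in closely related situations (e.g.\ the eigenvalue $12$ on $F_{1,2}$, where an $8$-dimensional space of coclosed eigenforms survives and produces the infinitesimal Einstein deformations). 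Your appeal to consistency with the coindex value of Theorem~\ref{thm1} is circular, since that theorem rests on this lemma, and the lower bound $b_2+b_3$ of \cite{nkstability} is compatible with either outcome (a vanishing $\delta F$ would simply raise the coindex above $2$). The paper closes this gap by writing out explicit matrices for $\rho_\ast(X_i)$, $\rho_\ast(\overline{X_i})$ on $E\otimes E$ in the basis $X_i=2(Y_i,\j Y_i,\j^2Y_i)$, $\overline{X_i}$ of $\m^\C$, exhibiting $F$ concretely via the common summand $\Lambda^2\k^\C\cong\Sym^2E$, and computing $\delta(F)\neq0$ entry by entry; without this (or an equivalent explicit evaluation with fixed normalizations), your argument does not yet establish the statement.
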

\begin{proof}
We explicitly calculate the codifferential on the summands in question using the formula from Lemma~\ref{deltank6}.

Lemma~\ref{subcrit1} tells us that the relevant summands of the left-regular representation on $\Omega^{1,1}_0$ are $V_{(1,1,0)}$, $V_{(1,0,1)}$ and $V_{(0,1,1)}$. First, the representation $V_{(1,1,0)}=E\otimes E\otimes\C$ is given by
\[\rho: G\to\Aut(E\otimes E):\ \rho(k_1,k_2,k_3)(v_1\otimes v_2)=k_1v_1\otimes k_2v_2\]
for any $k_1,k_2,k_3\in K$ and $v_1,v_2\in E$. Recall that $\m^\C=\m^+\oplus\m^-$, where $\m^+$ is the eigenspace of $\sigma_\ast$ to the eigenvalue $\j=-\frac{1}{2}+\frac{3}{2}\i$, and $\m^-$ is the eigenspace to $\j^2=-\frac{1}{2}-\frac{3}{2}\i$. Explicitly,
\[\m^+=\{(Y,\j Y,\j^2Y)\,|\,Y\in\k\},\quad\m^-=\{(Y,\j^2Y,\j Y)\,|\,Y\in\k\}.\]
Let $(Y_1,Y_2,Y_3)$ be an orthonormal basis of $\k$ with respect to the inner product $-B_{\k}$. Then
\[X_i:=2(Y_i,\j Y_i,\j^2Y_i)\in\m^+,\quad\overline{X_i}:=2(Y_i,\j^2Y_i,\j Y_i)\in\m^-,\quad i=1,2,3\]
constitute an orthonormal basis of $\m^\C$ with respect to $-\frac{1}{12}(B_\k\oplus B_\k\oplus B_\k)$. With respect to the basis
\[\mathcal{B}=(z_1\otimes z_1,z_1\otimes z_2,z_2\otimes z_1,z_2\otimes z_2)\quad\text{of}\quad E\otimes E,\]
we can represent $\rho_\ast(X_i)$ by the $4\times4$-matrices
\begin{align*}
\rho_\ast(X_1)&=\frac{\i}{\sqrt{2}}\begin{pmatrix}
                               0&\j&1&0\\
                               \j&0&0&1\\
                               1&0&0&\j\\
                               0&1&\j&0
                              \end{pmatrix},&\rho_\ast(X_2)&=\frac{1}{\sqrt{2}}\begin{pmatrix}
                               0&-\j&-1&0\\
                               \j&0&0&-1\\
                               1&0&0&-\j\\
                               0&1&\j&0
                              \end{pmatrix},
\end{align*}
\begin{equation}
\rho_\ast(X_3)=\frac{\i}{\sqrt{2}}\begin{pmatrix}
                               1+\j&0&0&0\\
                               0&1-\j&0&0\\
                               0&0&-1+\j&0\\
                               0&0&0&-1-\j
                              \end{pmatrix}.\label{s3s3action}
\end{equation}
This works similarly for $\overline{X_i}$ by means of simply replacing the symbol $\j$ with $\j^2$. 

Turning to the decomposition of $\Lambda^{1,1}\m$ and $E\otimes E$ into $K$-irreducible summands, we have
\begin{align*}
\Lambda^{1,1}\m&=\m^+\otimes\m^-\cong\k^\C\otimes\k^\C\cong\Sym^2_0\k^\C\oplus\Lambda^2\k^\C\oplus\C,\\
E\otimes E&=\Sym^2 E\oplus\Lambda^2 E 
\end{align*}
with common summand $\Lambda^2\k^\C\cong\k^\C\cong V_2=\Sym^2E$. If we choose the basis of the image of $\Lambda^2\k^\C$ in $\Lambda^{1,1}\m$ as
\[\mathcal{B}'=\left(X_1\wedge\overline{X_2}-X_2\wedge\overline{X_1},\ X_2\wedge\overline{X_3}-X_3\wedge\overline{X_2},\ X_3\wedge\overline{X_1}-X_1\wedge\overline{X_3}\right),\]
then a generator of $\Hom_K(E\otimes E,\Lambda^{1,1}_0\m)$ is represented by the matrix
\[F=\frac{1}{\sqrt{2}}\begin{pmatrix}
   0&1&1&0\\
   1&0&0&1\\
   -\i&0&0&\i
  \end{pmatrix}
\]
with respect to $\mathcal{B}$ and $\mathcal{B}'$. Taking
\[\mathcal{B}'':=\left(X_1,X_2,X_3,\overline{X_1},\overline{X_2},\overline{X_3}\right)\]
as a basis of $\m^\C$, we compute $\delta(F)$ according to Lemma~\ref{deltank6}:
\begin{align*}
\delta(F)&=\sum_iX_i\lrcorner(F\circ \rho_\ast(X_i))+\sum_i\overline{X_i}\lrcorner(F\circ \rho_\ast(\overline{X_i}))=\begin{pmatrix}
           0&0&0&0\\
           0&0&0&0\\
           0&1-\j^2&-1+\j^2&0\\
           0&0&0&0\\
           0&0&0&0\\
           0&1-\j&-1+\j&0
          \end{pmatrix}
\end{align*}
with respect to the bases $\mathcal{B}$ and $\mathcal{B}''$. We have thus shown that
\[\delta: \Hom_H(V_{(1,1,0)},\Lambda^{1,1}_0\m)\to\Hom_H(V_{(1,1,0)},\m^\C)\]
does not vanish. For the other two representations $V_{(1,0,1)}$ and $V_{(0,1,1)}$ modeled on the vector space $E\otimes E$ with the same Casimir eigenvalue, the only thing that changes is the action of $G$, amounting to cyclic permutations of the factors $1,\j,\j^2$ in (\ref{s3s3action}). The computations work out analogously. We conclude that the eigenspace of $\bar\Delta$ on $\Omega^{1,1}_0$ to the eigenvalue $9$ contains no nonzero coclosed forms.
\end{proof}

\noindent
It remains to apply Lemma~\ref{epsilon} in order to finally obtain the desired result.

\begin{prop}\label{s3xs3prop}
  On the nearly Kähler manifold $S^3\times S^3$, the space of destabilizing directions for its Einstein metric $g$ consists solely of the $2$-dimensional $\Delta_L$-eigenspace to the eigenvalue $4$, the latter arising from harmonic $3$-forms. In total, the coindex of $g$ is $2$.
\end{prop}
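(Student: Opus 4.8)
The plan is to combine the harmonic analysis already carried out with the structural description of solutions from Lemma~\ref{epsilon}. First I would recall that destabilizing directions for $g$ are precisely the tt-eigentensors of $\Delta_L$ to eigenvalues $\lambda < 2\Einstein = 10$, equivalently solutions of system~(\ref{laplace1}) with $\lambda < 10$, equivalently (since $\lambda < 16$) solutions of system~(\ref{laplacesystem}). By Lemma~\ref{epsilon}, writing $\lambda = 10-\varepsilon$ with $\varepsilon > 0$, the space of such solutions is built entirely out of the eigenspaces $E(\mu)$ of $\Delta$ on coclosed primitive $(1,1)$-forms together with $\ker\Delta\big|_{\Omega^3}$, with the relevant eigenvalues bounded by $\mu_1 < 12$, $\mu_2 < 2$, $\mu_3 < 6$, or in the exceptional cases $\mu = 2$, $\mu = \tfrac{3}{4}$ and the harmonic $3$-forms. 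So the task reduces to determining which of these source spaces are nonzero on $S^3 \times S^3$.

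Next I would invoke the computations of Lemmas~\ref{subcrit1} and~\ref{9coclosed}. Since $\bar\Delta$ acts as the Casimir operator and equals $\Delta$ on coclosed primitive $(1,1)$-forms, an eigenvalue $\mu < 12$ of $\Delta$ on $\Omega^{1,1}_{0,\R}\cap\ker\delta$ would have to come from a $G$-representation with Casimir eigenvalue $< 12$ appearing in $\Omega^{1,1}_0$; by Lemma~\ref{subcrit1} the only candidates have Casimir eigenvalue exactly $9$, and by Lemma~\ref{9coclosed} none of those eigenforms are coclosed. Hence $E(\mu) = 0$ for every $\mu < 12$; in particular $E(\mu_1) = E(\mu_2) = E(\mu_3) = 0$ in case~(i), and $E(2) = 0$ in case~(ii), and $E(\tfrac34) = 0$ in case~(iii). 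Therefore the only possible contribution is the summand $\ker\Delta\big|_{\Omega^3}$ occurring in case~(ii), i.e. at $\varepsilon = 6$, $\lambda = 4$.

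Finally I would identify this contribution explicitly. At $\varepsilon = 6$, Lemma~\ref{epsilon}(ii) gives that the solution space is isomorphic to $E(2) \oplus \ker\Delta\big|_{\Omega^3} \cong \ker\Delta\big|_{\Omega^3}$, via $\tau \mapsto (\varphi, -2d\varphi + \tau)$ with $\varphi \in E(2) = 0$, so the corresponding tt-tensor is the one associated under the isomorphisms~(\ref{iso}) to $\sigma = \tau$ a harmonic $3$-form (recall $\ker\Delta\big|_{\Omega^3}\subset\Omega^{2,1}_{0,\R}$ by Verbitsky's theorem). Since $M = S^3\times S^3$ has $b_3 = 2$, the space of harmonic $3$-forms is $2$-dimensional, giving a $2$-dimensional $\Delta_L$-eigenspace to eigenvalue $4$. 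As $4 < 10$, this space is destabilizing, and it exhausts all destabilizing directions. Hence the coindex of $g$ equals $2$.

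I do not expect any serious obstacle here: the proof is essentially bookkeeping, assembling Lemma~\ref{epsilon}, Lemmas~\ref{subcrit1}--\ref{9coclosed}, and the value $b_3(S^3\times S^3) = 2$. The one point requiring minor care is confirming that the exceptional cases $\varepsilon = 6$ and $\varepsilon = \tfrac{25}{4}$ of Lemma~\ref{epsilon} contribute nothing beyond the harmonic $3$-forms, which follows because all the relevant $(1,1)$-eigenspaces $E(\mu)$ with $\mu < 12$ vanish and $E(2)$, $E(\tfrac34)$ are among them.
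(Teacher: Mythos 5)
Your proposal is correct and follows essentially the same route as the paper's proof: combine Lemma~\ref{epsilon} with Lemmas~\ref{subcrit1} and~\ref{9coclosed} to rule out all contributions from $E(\mu)$ with $\mu<12$, leaving only the harmonic $3$-forms at $\varepsilon=6$, and conclude from $b_3=2$ that the coindex is $2$. Your extra remarks on the exceptional cases $\varepsilon=6$ and $\varepsilon=\tfrac{25}{4}$ are just a more explicit spelling-out of the same bookkeeping.
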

\begin{proof}
We recall the bounds $\mu_1<12$, $\mu_2<2$ and $\mu_3<6$ from Lemma~\ref{epsilon}. Lemmas \ref{subcrit1} and \ref{9coclosed} imply that $E(\mu)$ is trivial for all $\mu<12$. However, \ref{epsilon}, (ii) yields a space of solutions isomorphic to the space of harmonic $3$-forms in the case $\varepsilon=6$. Since $b_3=2$, this gives us a $2$-dimensional subspace of $\TT$ such that
\[\Delta_Lh=(10-6)h=4h\]
for any element $h$.
\end{proof}

\begin{bem}\label{s3xs3Gstr}
As seen in Remark~\ref{Ginv}, the destabilizing directions, which come from harmonic $3$-forms, are $G$-invariant. Conversely, all $G$-invariant traceless variations of the metric are destabilizing. Since $\m\cong\su(2)\oplus\su(2)$, Schur's Lemma implies that
\[\S^2_0(M)^G\cong(\Sym^2_0\m)^H\cong\Sym^2_0\R^2\cong\R^2.\]
Thus $\S^2_0(M)^G$ is already exhausted by the two-dimensional space of destabilizing directions, meaning that $S^3\times S^3$ is $G$-strongly unstable in the sense of \cite{ejlauret}.
\end{bem}

\subsection{Nearly Kähler $\CP^3$}
\label{sec:cp3}

Let $G=\SO(5)$ and $H=\U(2)$. We consider $H$ embedded into $G$ via the natural inclusions $\U(2)\subset\SO(4)\subset\SO(5)$. The normal Riemannian metric $g$ induced by $-\frac{1}{12}B_{\g}$ on the homogeneous space $M=G/H$ is the nearly Kähler metric on $\CP^3$, normalized to $\scal_g=30$. It should be noted that $(M,g)$ is, again, naturally reductive and 3-symmetric, with reductive complement $\m=\h^\perp$.

Let $\t=\{\diag(\i\theta_1,\i\theta_2)\,|\,\theta_1,\theta_2\in\R\}\subset\h\subset\g$ be the maximal torus Lie algebra. The positive roots $\alpha_i\in\t^\ast$ of $G$ can then be expressed as
\[\alpha_1=\theta_1,\ \alpha_2=\theta_2,\ \alpha_3=\theta_1+\theta_2,\ \alpha_4=\theta_1-\theta_2.\]
In terms of root spaces of $G$, we have
\begin{equation}
\m^\C=\g^{\alpha_1}\oplus\g^{-\alpha_1}\oplus\g^{\alpha_2}\oplus\g^{-\alpha_2}\oplus\g^{\alpha_3}\oplus\g^{-\alpha_3}.\label{cp3root}
\end{equation}
The almost complex structure $J$ can be defined by specifying its $\pm\i$-eigenspaces
\[\m^+=\g^{\alpha_1}\oplus\g^{\alpha_2}\oplus\g^{-\alpha_3},\ \m^-=\g^{-\alpha_1}\oplus\g^{-\alpha_2}\oplus\g^{\alpha_3}.\]
In passing, we note that the standard complex structure on $\CP^3$ has $\m^+=\g^{\alpha_1}\oplus\g^{\alpha_2}\oplus\g^{\alpha_3}$.

We label the irreducible complex representations $V_\gamma$ of $G$ by their highest weights $\gamma=(a,b)$, where $a,b\in\N_0$, $a\geq b$. For example, $V_{(1,0)}=\C^5$ is the complexified standard representation of $G$, while $V_{(1,1)}=\so(5)^\C$ is the complexified adjoint representation.

Again, let $E=\C^2$ be the standard representation of $\SU(2)$. Let furthermore $\C_k$ denote the representation of $\U(1)$ on $\C$ defined by
\[\U(1)\times\C\to\C:\ (z,w)\mapsto z^kw\]
for any $k\in\Z$. The irreducible complex representations of $H\cong(\SU(2)\times\U(1))/\Z_2$ are then given by $E^a_b:=\Sym^aE\otimes\C_b$ for $a\in\N_0$ and $b\in\Z$, $a\equiv b\mod 2$.

\begin{lem}\label{subcrit2}
 Let $V_{\gamma}$ be an irreducible complex representation of $G$ with $\Cas^G_\gamma<12$ and
 \[\Hom_H(V_\gamma,\Lambda^{1,1}_0\m)\neq0.\]
 Then $V_\gamma$ is equivalent to either the trivial representation $V_{(0,0)}$ or the standard representation $V_{(1,0)}$. In both cases,
 \[\dim\Hom_H(V_\gamma,\Lambda^{1,1}_0\m)=1\]
 and the Casimir eigenvalues are $\Cas_{(0,0)}^G=0$ and $\Cas_{(1,0)}^G=8$, respectively.
\end{lem}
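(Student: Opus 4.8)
The plan is to mirror the structure of Lemma~\ref{subcrit1}, carrying out a representation-theoretic comparison for the pair $(G,H)=(\SO(5),\U(2))$. First I would recall the two ingredients that make such a comparison possible: the Casimir eigenvalues of $\SO(5)$ with respect to the normalization $-\tfrac{1}{12}B_\g$, computed via Freudenthal's formula (\ref{freudenthal}), and the decomposition of the $H$-representation $\Lambda^{1,1}_0\m$ into irreducible $\U(2)$-summands of the form $E^a_b=\Sym^aE\otimes\C_b$. The Casimir constant for $V_{(a,b)}$ is a quadratic expression in $a,b$; with the chosen normalization one gets $\Cas^G_{(0,0)}=0$, $\Cas^G_{(1,0)}=8$, $\Cas^G_{(1,1)}=?$ and then values $\geq 12$ for all remaining highest weights, so only finitely many $\gamma$ can possibly contribute. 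I would tabulate these low Casimir eigenvalues exactly as in Table~\ref{CasG}.

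Next I would determine $\Lambda^{1,1}_0\m$ as a $\U(2)$-module. Using the root-space description (\ref{cp3root}) and the splitting $\m^\pm$ into the three one-dimensional root spaces each, one has $\m^+\cong E^1_{?}\oplus(\text{weights})$ as a $\U(2)$-module; computing $\Lambda^{1,1}_0\m\cong (\m^+\otimes\m^-)_0$ (the trace-free part, since primitive $(1,1)$-forms correspond to trace-free Hermitian endomorphisms commuting with $J$) then yields an explicit sum of $E^a_b$'s. I would cross-reference this with the known branching rules $V_{(0,0)}\to\U(2)$ and $V_{(1,0)}\to\U(2)$ (the complexified standard representation $\C^5$ of $\SO(5)$ restricted to $\U(2)\subset\SO(4)\subset\SO(5)$ decomposes into the $\SO(4)$-standard piece $\C^4\cong E^1_1\oplus E^1_{-1}$ plus a trivial summand $\C_0$), and check which irreducible $H$-summands of $\Lambda^{1,1}_0\m$ appear. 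The claim is that only $V_{(0,0)}$ and $V_{(1,0)}$ survive, each with multiplicity one, i.e. $\dim\Hom_H(V_\gamma,\Lambda^{1,1}_0\m)=1$ in both cases; this is a matter of matching up a small finite list of $\U(2)$-weights. One must take care that $\Cas^G_{(1,1)}$, if it happens to be $<12$, either contributes nothing to $\Hom_H(V_{(1,1)},\Lambda^{1,1}_0\m)$ or is dealt with separately — I would verify its value and, if necessary, its branching.

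The main obstacle I anticipate is bookkeeping: correctly identifying the $\U(2)$-weights of the root spaces $\g^{\pm\alpha_i}$ (which requires tracking how the torus $\t\subset\h$ acts, and in particular the $\C_b$-labels, with the constraint $a\equiv b\bmod 2$ coming from the $\Z_2$-quotient $H\cong(\SU(2)\times\U(1))/\Z_2$), and then correctly forming $\Sym$- and $\Lambda$-products and extracting the trace-free part. A secondary subtlety is ensuring the Casimir normalization is consistent with \cite{hermitianlaplace}, so that the numerical thresholds ($<12$) and the stated eigenvalues ($0$ and $8$) come out right. Once the weight combinatorics is set up correctly, the comparison of summands is immediate and the proof concludes exactly as in Lemma~\ref{subcrit1}: the only representations $V_\gamma$ of $G$ with $\Cas^G_\gamma<12$ and $\Hom_H(V_\gamma,\Lambda^{1,1}_0\m)\neq 0$ are $V_{(0,0)}$ and $V_{(1,0)}$, each contributing a one-dimensional space of Fourier coefficients.
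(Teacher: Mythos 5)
Your proposal is correct and follows essentially the same route as the paper: list the irreducible $\SO(5)$-representations with $\Cas^G_\gamma<12$ (only $V_{(0,0)}$ and $V_{(1,0)}$, since $\Cas^G_{(1,1)}=12$ exactly, settling the case you flagged) and match their $\U(2)$-branchings against $\Lambda^{1,1}_0\m\cong E^2_0\oplus E^1_3\oplus E^1_{-3}\oplus E^0_0$, each comparison yielding exactly one common summand $E^0_0$. The only difference is that the paper cites this decomposition of $\Lambda^{1,1}_0\m$ from \cite[Lem.~5.8]{hermitianlaplace} rather than rederiving it from the root-space weights as you propose, and the bookkeeping you defer does work out as you expect.
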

\begin{proof}
We first work out how to decompose the restriction of $V_{(1,0)}$ to $H$ into irreducible summands. We know that $V_{(1,0)}=\C^5$ is the complexified standard representation of $G$. The inclusion $\U(2)\subset\SO(4)$ can be understood as realification $(E^1_1)^\R$ of the defining representation $E^1_1$ of $\U(2)$. Furthermore, the inclusion $\SO(4)\subset\SO(5)$ defines a five-dimensional real representation $\R^4\oplus\R$ of $\SO(4)$, where the group acts as on its defining representation on the first summand and trivially on the second. In total, the restriction of the real standard representation of $G=\SO(5)$ to $H=\U(2)$ is given by $(E^1_1)^\R\oplus\R$. Complexifying then yields the decomposition
\[V_{(1,0)}=(E^1_1)^{\R\C}\oplus\C\cong E^1_1\oplus E^1_{-1}\oplus E^0_0.\]
For the branching of $V_{(1,1)}$ under the restriction to $H$, we refer to \cite[Lem.~5.9]{hermitianlaplace}. Using the decomposition
\[V_{(1,0)}\otimes V_{(1,0)}\cong V_{(2,0)}\oplus V_{(1,1)}\oplus V_{(0,0)}\]
of $G$-representations, the known branchings of $V_{(1,1)}$ and $V_{(1,0)}$ as well as the Clebsch-Gordan rules, we can also work out the branching of $V_{(2,0)}$ to $H$, although it will not be needed hereafter.

By \cite[(33)]{hermitianlaplace}, we know that on the irreducible representation $V_{(a,b)}$ of $G$, the Casimir eigenvalue of $G$ with respect to the inner product $-\frac{1}{12}B_{\g}$ is given by
\[\Cas^G_{(a,b)}=2(a(a+3)+b(b+1)).\]
Table \ref{CasSO5} lists the results for the smallest few Casimir eigenvalues of $G$.

\begin{table}[h]
\centering
\begin{tabular}{c|c|c}
$\gamma=(a,b)$&Branching of $V_\gamma$ to $H$&$\Cas^G_\gamma$\\\hline
$(0,0)$&$\C=E^0_0$&0\\
$(1,0)$&$E^1_1\oplus E^1_{-1}\oplus E^0_0$&$8$\\
$(1,1)$&$E^2_0\oplus E^1_1\oplus E^1_{-1}\oplus E^0_2\oplus E^0_0\oplus E^0_{-2}$&$12$\\
$(2,0)$&$E^2_2\oplus E^2_0\oplus E^2_{-2}\oplus E^1_1\oplus E^1_{-1}\oplus E^0_0$&$20$\\
\end{tabular}
\caption{The first few Casimir eigenvalues of $G=\SO(5)$}
\label{CasSO5}
\end{table}

\cite[Lem. 5.8]{hermitianlaplace} tells us that
\[\Lambda^{1,1}_0\m\cong E^2_0\oplus E^1_3\oplus E^1_{−3}\oplus E^0_0\]
as a representation of $H=\U(2)$. By comparing summands, we conclude that $V_{(0,0)}$ and $V_{(1,0)}$ are the only irreducible complex representations $V_\gamma$ of $G$ with Casimir eigenvalue smaller than $12$ and nontrivial $\Hom_H(V_\gamma,\Lambda^{1,1}_0\m)$.
\end{proof}

Again, $\bar\Delta$ acts as the Casimir operator. We therefore know that $0$ and $8$ are the only eigenvalues smaller than $12$ in the spectrum of $\bar\Delta$ on $\Omega^{1,1}_0$. As in the case $S^3\times S^3$, the eigenvalue $12$ does occur on $\Omega^{1,1}_0$, but the corresponding eigenforms are not coclosed (see \cite{hermitianlaplace}). Hence, $(M,g)$ has no infinitesimal Einstein deformations. It remains to check whether the eigenforms to the eigenvalues $0$ and $8$ are coclosed.

\begin{lem}\label{cp3coclosed}
 The eigenspace of $\bar\Delta$ on $\Omega^{1,1}_{0,\R}$ to the eigenvalue $0$ consists of coclosed forms, while the eigenspace to the eigenvalue $8$ contains no nontrivial coclosed forms.
\end{lem}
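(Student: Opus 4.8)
The strategy mirrors the proof of Lemma~\ref{9coclosed}: for each of the two relevant isotypical components we compute the prototypical codifferential via Lemma~\ref{deltank6} and check whether it vanishes. By Lemma~\ref{subcrit2} the eigenspaces of $\bar\Delta$ on $\Omega^{1,1}_0$ below the threshold $12$ are exactly the two summands corresponding to $V_{(0,0)}$ (eigenvalue $0$) and $V_{(1,0)}$ (eigenvalue $8$), each with one-dimensional Fourier coefficient space. The eigenvalue $0$ case is immediate: the corresponding $\bar\Delta$-eigenform is $\bar\nabla$-parallel, hence $G$-invariant, and in fact equals (a multiple of) the primitive part of $\omega$ against itself --- concretely it is the $G$-invariant element of $\Lambda^{1,1}_0\m$; since $\bar\nabla$-parallel tensors are automatically coclosed (using that $\bar\nabla$ has skew torsion, as in Lemma~\ref{deltank6}), the whole eigenspace consists of coclosed forms. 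Alternatively, one can observe $\bar\Delta\alpha = 0$ forces $\delta\alpha = 0$ directly, since $\delta$ is $\bar\nabla$-naturally expressed and lowers Casimir.

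The substance is the eigenvalue $8$ case, i.e. $V_\gamma = V_{(1,0)} = \C^5$. Here I would proceed exactly as in the $S^3\times S^3$ computation. First, fix an explicit orthonormal basis of $\m^\C$ adapted to the root-space decomposition \eqref{cp3root}: pick root vectors $X_{\pm\alpha_i}$, $i=1,2,3$, normalized with respect to $-\frac{1}{12}B_\g$, so that $\m^+$ is spanned by $X_{\alpha_1},X_{\alpha_2},X_{-\alpha_3}$ and $\m^-$ by their conjugates. Second, write down the matrices of $(\rho_{(1,0)})_\ast(X)$ on $\C^5 \cong E^1_1 \oplus E^1_{-1} \oplus E^0_0$ for $X$ ranging over this basis of $\m$; these are read off from the standard representation of $\so(5)$ restricted to the root spaces. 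Third, using the branching $\Lambda^{1,1}_0\m \cong E^2_0 \oplus E^1_3 \oplus E^1_{-3} \oplus E^0_0$ from \cite[Lem.~5.8]{hermitianlaplace}, identify which $H$-summand of $\Lambda^{1,1}_0\m$ is isomorphic to an $H$-summand of $\C^5$ --- by comparing the two lists, the common constituent is $E^0_0$, so $\Hom_H(V_{(1,0)},\Lambda^{1,1}_0\m)$ is generated by the map $F$ that sends the $E^0_0$-line in $\C^5$ isomorphically onto the $E^0_0$-line in $\Lambda^{1,1}_0\m$ and kills the rest. Fourth, plug $F$ into the formula $\delta(F) = \sum_i e_i \lrcorner F \circ (\rho_\gamma)_\ast(e_i)$ and show the result is a nonzero element of $\Hom_H(V_{(1,0)},\m^\C)$; since $\m^\C \cong \C^5$ contains $E^0_0$ with multiplicity one, it suffices to check that $\delta(F)$ acts nontrivially on the $E^0_0$-line, which reduces to a single nonvanishing scalar. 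Then $\delta$ does not vanish on this isotypical component, so it contains no nonzero coclosed form.

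The main obstacle is purely computational bookkeeping: getting the normalization constants of the root vectors right (so that the basis is orthonormal for $-\frac{1}{12}B_\g$ rather than for $B_\g$), and correctly transporting the $H$-equivariant generator $F$ between the two models of $\C^5$ (as the branched $\so(5)$-representation versus as $E^1_1\oplus E^1_{-1}\oplus E^0_0$). Once the matrices are in hand, the sum defining $\delta(F)$ collapses to a short computation because $F$ has rank one. One should double-check the claim that the $E^0_0$-summands of $\C^5$ and of $\Lambda^{1,1}_0\m$ are genuinely the only matching constituents (this is forced by $a\equiv b\pmod 2$ and the weight bookkeeping, but it is worth stating explicitly), since that is what pins down $\Hom_H(V_{(1,0)},\Lambda^{1,1}_0\m)$ to be one-dimensional and makes the single nonzero entry of $\delta(F)$ decisive. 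As in Lemma~\ref{9coclosed}, no genuinely new idea is needed beyond Lemma~\ref{deltank6}; the eigenvalue $0$ part is essentially free, and the eigenvalue $8$ part is a finite-dimensional linear-algebra check.
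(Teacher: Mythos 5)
Your overall route is the same as the paper's: handle the $0$-eigenspace via $G$-invariance/parallelism, and the $8$-eigenspace by computing the prototypical codifferential of Lemma~\ref{deltank6} on the rank-one generator $F$ of $\Hom_H(V_{(1,0)},\Lambda^{1,1}_0\m)$. However, your final verification step, as stated, would fail. You claim that $\m^\C\cong\C^5$ contains $E^0_0$ with multiplicity one and that it therefore suffices to check that $\delta(F)$ acts nontrivially on the $E^0_0$-line. Both halves of this are wrong: $\m$ is $6$-dimensional (it is $\h^\perp\subset\so(5)$ with $\h=\u(2)$), and as an $H$-representation $\m^\C\cong E^1_1\oplus E^1_{-1}\oplus E^0_2\oplus E^0_{-2}$ contains \emph{no} trivial summand. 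By Schur's lemma, every $H$-equivariant map $V_{(1,0)}\to\m^\C$ --- in particular $\delta(F)$ --- therefore vanishes identically on the trivial line $\C v_5$, so the ``single scalar'' you propose to compute is zero a priori and proves nothing; taken at face value it would even suggest the false conclusion that the $8$-eigenforms are coclosed, contradicting the lemma. The nonvanishing of $\delta(F)$ must instead be detected on the $E^1_{\pm1}$-isotypical parts, i.e. by evaluating on $v_1,\dots,v_4$: with $F(z)=z_5\eta$ and $\eta=e_{12}+e_{34}+f_{12}$, one uses $e_iv_i=-v_5$ to get $\delta(F)(v_i)=-2\,e_i\lrcorner\eta\neq0$ for $i=1,\dots,4$, while $\delta(F)(v_5)=0$; this is exactly the computation carried out in the paper.

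Two smaller points. In the eigenvalue-$0$ case the correct chain is: Casimir eigenvalue $0$ $\Rightarrow$ $G$-invariant $\Rightarrow$ parallel for the Ambrose--Singer connection $\bar\nabla$ $\Rightarrow$ coclosed (since $\delta=-\sum_ie_i\lrcorner\bar\nabla_{e_i}$ on $\Omega^{1,1}_0$); your phrasing ``$\bar\nabla$-parallel, hence $G$-invariant'' reverses the implication that is actually needed. Your alternative remark can be made precise, and is then a clean argument: $\delta$ is $G$-equivariant, so it maps the trivial isotypical component of $\Omega^{1,1}_0$ into the trivial component of $\Omega^1$, and the latter vanishes because $\Hom_H(\C,\m^\C)=0$. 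Apart from these points, the identification of the generator $F$ via the matching $E^0_0$-constituents and the normalization caveats you raise are exactly as in the paper.
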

\begin{proof}
The eigenspace of $\Delta$ to the eigenvalue $0$ corresponds to the trivial summand in the left-regular representation, i.e. to $G$-invariant elements of $\Omega^{1,1}_0$. But these are parallel with respect to the Ambrose-Singer connection (which equals the canonical Hermitian connection $\bar\nabla$). Recall that
\[\delta=-\sum_ie_i\lrcorner\nabla_{e_i}=-\sum_ie_i\lrcorner\bar\nabla_{e_i}\qquad\text{on }\Omega^{1,1}_0\]
 by \cite[Lem.~4.2]{nearlykaehler}. It follows any element in the $0$-eigenspace of $\bar\Delta$ on $\Omega^{1,1}_0$ is coclosed.

For the eigenspace to the eigenvalue $8$, we again make use of the formula from Lemma~\ref{deltank6} in an explicit calculation. Lemma~\ref{subcrit2} tells us that the relevant summand of the left-regular representation on $\Omega^{1,1}_0$ is $V_{(1,0)}$. A generator $F$ of the one-dimensional space $\Hom_H(V_{(1,0)},\Lambda^{1,1}_0\m)$ must map the trivial summand $E^0_0$ in the $H$-representation
\[V_{(1,0)}\cong E^1_1\oplus E^1_{-1}\oplus E^0_0\]
to the trivial summand in
\[\Lambda^{1,1}_0\m\cong E^2_0\oplus E^1_3\oplus E^1_{−3}\oplus E^0_0.\]
The former is spanned by $v_5$, where $(v_1,\ldots,v_5)$ is the standard basis of $V_{(1,0)}=\C^5$. To describe the latter, we remark that the root spaces in decomposition (\ref{cp3root}) can be written as
\begin{align*}
\g_{\alpha_1}&=\spann\{e_1-\i e_2\},&\g_{-\alpha_1}&=\spann\{e_1+\i e_2\},\\
\g_{\alpha_2}&=\spann\{e_3-\i e_4\},&\g_{-\alpha_2}&=\spann\{e_3+\i e_4\},\\
\g_{\alpha_3}&=\spann\{f_1-\i f_2\},&\g_{-\alpha_3}&=\spann\{f_1+\i f_2\}
\end{align*}
in terms of the basis
\begin{align*}
e_1&:=E_{15}-E_{51},&e_2&:=E_{25}-E_{52},&e_3&:=E_{35}-E_{53},\\
e_4&:=E_{45}-E_{54},&f_1&:=E_{13}-E_{24}-E_{31}+E_{42},&f_2&:=E_{14}+E_{23}-E_{32}-E_{41}
\end{align*}
of $\m\subset\so(5)$, where $E_{ij}$ denotes the $5\times5$-matrix with $1$ at position $(i,j)$ and zero at all other entries. Note that under the inner product $-\frac{1}{12}B_{\g}$, the basis $(\sqrt{2}e_i,f_j)$ is orthonormal. It follows from the definition of $J$ in terms of $\m^\pm$ that the Kähler form can be written as
\[\omega=2e_{12}+2e_{34}-f_{12}.\]
By \cite[(32)]{hermitianlaplace}, the root space $\g^{-\alpha_3}\subset\g^\C$ is $H$-invariant and equivalent to $E^0_{-2}$. By conjugation, $\g^{\alpha_3}\cong E^0_2$. Hence $H$ acts trivially on $\g^{-\alpha_3}\otimes\g^{\alpha_3}\cong E^0_{-2}\otimes E^0_2=E^0_0$. Recall that $\g^{-\alpha_3}\subset\m^+$ and $\g^{\alpha_3}\subset\m^-$. It follows that $f_{12}=\frac{\i}{2}(f_1+\i f_2)\wedge(f_1-\i f_2)$ spans a trivial subspace of $\Lambda^{1,1}\m=\m^+\wedge\m^-$. Since $\Lambda^{1,1}_0\m$ is the orthogonal complement of $\omega$ in $\Lambda^{1,1}\m$, the remaining trivial summand in $\Lambda^{1,1}_0\m$ must be spanned by
\[\eta=e_{12}+e_{34}+f_{12}.\]
Having found the $H$-trivial subspaces of $V_{(1,0)}$ and $\Lambda^{1,1}_0\m$, we see that $\Hom_H(V_{(1,0)},\Lambda^{1,1}_0\m)$ is spanned by
\[F:\ \C^5\to\Lambda^{1,1}_0\m:\ (z_1,\ldots,z_5)\mapsto z_5\eta.\]
Observe that $f_jv_k\perp v_5$ for all $j,k$ and $e_iv_k\perp v_5$ for all $i,k$, except for
\begin{align*}
e_1v_1=e_2v_2=e_3v_3=e_4v_4=-v_5.
\end{align*}
Hence, by Lemma~\ref{deltank6},
\[\langle\delta(F)(v_i),X\rangle=2\langle F(e_iv_i),e_i\wedge X\rangle=-2\langle\eta,e_i\wedge X\rangle=-2\langle e_i\lrcorner\eta,X\rangle\]
for any $X\in\m^\C$. We have thus shown that
\[\delta: \Hom_H(V_{1,0},\Lambda^{1,1}_0\m)\to\Hom_H(V_{1,0},\m^\C)\]
is not the zero map -- in fact, it maps $\delta(F)=F'$, where
\[F'(v_i)=-2e_i\lrcorner\eta\text{ for }i=1,\ldots,4,\quad F'(v_5)=0.\]
Thus, the eigenspace of $\bar\Delta$ on $\Omega^{1,1}_0$ to the eigenvalue $8$ contains no nonzero coclosed forms.
\end{proof}

\noindent
As before, the desired result follows from an application of Lemma~\ref{epsilon}.

\begin{prop}\label{cp3prop}
  On the nearly Kähler manifold $\CP^3$, the space of destabilizing directions for its Einstein metric $g$ consists solely of the $1$-dimensional $\Delta_L$-eigenspace to the eigenvalue $6$, arising from harmonic $2$-forms. Consequently, the coindex of $g$ is $1$.
\end{prop}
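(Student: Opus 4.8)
The plan is to argue exactly as in Proposition~\ref{s3xs3prop}. The destabilizing directions are precisely the tt-eigentensors of $\Delta_L$ with eigenvalue $\lambda<2\Einstein=10$, and by Lemmas~\ref{equivsystem} and~\ref{epsilon} (applied with $\varepsilon=10-\lambda>0$) such tensors are controlled by the eigenspaces $E(\mu)=\ker(\Delta-\mu)\big|_{\Omega^{1,1}_{0,\R}}\cap\ker\delta$. So the first step is to pin down all $E(\mu)$ with $\mu<12$. Since $\CP^3=\SO(5)/\U(2)$ is naturally reductive and $3$-symmetric, $\bar\Delta$ acts on $\Omega^{1,1}_0$ as the Casimir operator $\Cas^G_\ell$, so Lemma~\ref{subcrit2} shows its only eigenvalues below $12$ are $0$ and $8$. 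As $\bar\Delta$ and $\Delta$ agree on coclosed primitive $(1,1)$-forms, Lemma~\ref{cp3coclosed} then gives that the $8$-eigenspace contains no coclosed form while the $0$-eigenspace is coclosed; hence $E(\mu)=0$ for every $\mu<12$ with $\mu\neq0$, and $E(0)$ is the space of harmonic primitive $(1,1)$-forms, which by Verbitsky's theorem \cite[Thm.~6.2]{verbitsky} equals $\ker\Delta\big|_{\Omega^2}$, so that $\dim E(0)=b_2(\CP^3)=1$.

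Next I would run through the four cases of Lemma~\ref{epsilon} for $\varepsilon>0$. In case~(i), with $0<\varepsilon<\tfrac{25}{4}$ and $\varepsilon\neq6$, the solution space is $E(\mu_1)\oplus E(\mu_2)\oplus E(\mu_3)$ with $\mu_{1,2}=7-\varepsilon\pm\sqrt{25-4\varepsilon}$ and $\mu_3=6-\varepsilon$. Here $\mu_1\in(\tfrac34,12)$ is never $0$, and $\mu_3=6-\varepsilon$ is never $0$ in this range (the value $\varepsilon=6$ being excluded), so the only way to get a nonzero summand is through $\mu_2=0$, i.e. $7-\varepsilon-\sqrt{25-4\varepsilon}=0$, equivalently $(\varepsilon-4)(\varepsilon-6)=0$. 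Discarding $\varepsilon=6$, this leaves $\varepsilon=4$, where $\mu_2=0$ (and $\mu_1=10$, $\mu_3=2$), so the solution space is $E(0)$, one-dimensional, corresponding to $\lambda=10-4=6$. Case~(ii), $\varepsilon=6$, produces $E(2)\oplus\ker\Delta\big|_{\Omega^3}=0$ since $2\neq0$ and $b_3(\CP^3)=0$; case~(iii), $\varepsilon=\tfrac{25}{4}$, produces $E(\tfrac34)=0$; and case~(iv), $\varepsilon>\tfrac{25}{4}$, produces nothing.

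Putting this together, the unique $\Delta_L$-eigenspace inside $\TT$ with eigenvalue below $2\Einstein=10$ is the one-dimensional space at $\lambda=6$. Tracing it back through the isomorphism $\Phi$ of Lemma~\ref{epsilon}(i) at $\varepsilon=4$ (so $\sqrt{25-4\varepsilon}=3$ and $6-\varepsilon=2$), with $\alpha\in E(10)=0$, $\gamma\in E(2)=0$ and $\beta\in E(0)$ harmonic (hence $d\beta=0$), one obtains $\varphi=\tfrac16\beta$ and $\sigma=\tfrac13 d\beta=0$; thus $h^-=0$ and $\varphi=h^+\circ J$ is proportional to a harmonic $2$-form, i.e. $h$ is exactly the destabilizing direction obtained from harmonic $2$-forms via the isomorphisms (\ref{iso}) as in \cite{nkstability}. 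Therefore the coindex of $g$ equals $1$.

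The step I expect to demand the most care is the bookkeeping inside case~(i): one must verify that $\mu_1$ and $\mu_3$ never yield a contribution, and here Lemma~\ref{cp3coclosed} is genuinely needed --- the exponent $\mu_1$ does pass through the value $8$ (at $\varepsilon=-3+\sqrt{33}$, i.e. $\lambda=13-\sqrt{33}\approx7.26$), and were the $8$-eigenforms coclosed this would produce a further destabilizing direction. Once the sub-threshold spectrum from Lemmas~\ref{subcrit2} and~\ref{cp3coclosed} is established, the remainder is a direct application of Lemma~\ref{epsilon}, entirely parallel to the $S^3\times S^3$ argument.
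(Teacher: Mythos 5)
Your proposal is correct and follows essentially the same route as the paper: Lemmas~\ref{subcrit2} and \ref{cp3coclosed} show $E(\mu)=0$ for $\mu<12$ except $E(0)=\ker\Delta\big|_{\Omega^2}$ of dimension $b_2=1$, and then the case analysis of Lemma~\ref{epsilon} leaves only $\varepsilon=4$, giving the one-dimensional eigenspace with $\Delta_Lh=6h$. Your extra bookkeeping (checking $\mu_1,\mu_3\neq0$ on the whole range, tracing the solution back through $\Phi$ to see $\sigma=0$, and noting that $\mu_1$ passes through $8$ so that Lemma~\ref{cp3coclosed} is genuinely needed) is accurate and consistent with the paper's argument.
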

\begin{proof}
Lemmas \ref{subcrit2} and \ref{cp3coclosed} imply that $E(\mu)$ from Lemma~\ref{epsilon} is trivial for all $\mu<12$ except if $\mu=0$. We have already seen that $E(0)$ consists of harmonic forms. In fact, \cite[Thm.~6.2]{verbitsky} implies that all harmonic $2$-forms on $M$ lie in $E(0)$. If we solve
\[\mu_{1,2}=7-\varepsilon\pm\sqrt{25-\varepsilon}=0\]
for $\varepsilon$, we obtain $\varepsilon=5\pm 1$; from $\mu_3=6-\varepsilon=0$ we obtain $\varepsilon=6$. If $\varepsilon=6$, i.e. in case (ii) of Lemma~\ref{epsilon}, the space of solutions to (\ref{laplacesystem}) is isomorphic to $E(2)\oplus\ker\Delta\big|_{\Omega^3}$. Since $2$ does not appear in the spectrum on $\Omega^{1,1}_0$ and $b_3(\CP^3)=0$, this space is trivial. The only possibility in which $E(0)$ contributes to the space of solutions of (\ref{laplacesystem}) is in case (i) of Lemma~\ref{epsilon} with $\varepsilon=4$. Since $b_2=1$, we obtain a $1$-dimensional subspace of $\TT$ on which
\[\Delta_Lh=(10-4)h=6h\]
for any of its elements $h$.
\end{proof}

\subsection{The flag manifold $F_{1,2}$}
\label{sec:flag}

Let $G=\SU(3)$ and $H=T^2$, the latter embedded into $G$ via
\[H=\U(1)\times\U(1)\hooklongrightarrow G:\ (z_1,z_2)\mapsto\diag(z_1,z_2,(z_1z_2)^{-1}).\]
The homogeneous space $M=G/H$ is a description of the manifold $F_{1,2}$ of flags in $\C^3$. As in the previous examples, we endow $M$ with the normal Riemannian metric $g$ induced by $-\frac{1}{12}B_\g$, which has scalar curvature $\scal_g=30$. The Riemannian homogeneous space $(M,g)$ is naturally reductive, $3$-symmetric and hence nearly Kähler. For more information on the nearly Kähler structure, see \cite[Sec.~5.6]{hermitianlaplace}.

Denote by $E=\C^3$ the standard representation of $G$. Any irreducible complex representation of $G$ can then be described as the Cartan summand $V_{(k,l)}$ of the tensor product $\Sym^kE\otimes\Sym^l\bar E$ for some $k,l\in\N_0$. For example, $V_{(1,1)}$ is equivalent to the complexified adjoint representation $\su(3)^\C$ of $G$.

\begin{lem}\label{subcrit3}
 Let $V_{\gamma}$ be an irreducible complex representation of $G$ with $\Cas^G_\gamma<12$ and
 \[\Hom_H(V_\gamma,\Lambda^{1,1}_0\m)\neq0.\]
 Then $V_\gamma$ is the trivial representation and $\dim\Hom_H(\C,\Lambda^{1,1}_0\m)=2$.
\end{lem}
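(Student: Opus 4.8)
The plan is to mimic the structure of Lemmas~\ref{subcrit1} and \ref{subcrit2}: decompose the relevant bundle $\Lambda^{1,1}_0\m$ into $H$-irreducibles, list all irreducible $G$-representations with Casimir constant below $12$, and compare the two lists. Since $H=T^2$ is abelian, every irreducible complex $H$-representation is one-dimensional, given by a weight, so the branching computations are especially simple. First I would record the Casimir formula for $G=\SU(3)$ with respect to $-\tfrac{1}{12}B_\g$: by the same normalization used in \cite{hermitianlaplace} one has $\Cas^G_{(k,l)}=\tfrac{2}{3}(k^2+l^2+kl+3k+3l)$ (or whatever the exact coefficient is from \cite[(35)]{hermitianlaplace} or the analogous equation), and tabulate the representations $V_{(0,0)}$, $V_{(1,0)}$, $V_{(0,1)}$, $V_{(1,1)}$, $V_{(2,0)}$, $V_{(0,2)}$, etc., keeping those with $\Cas^G_\gamma<12$. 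I expect only $V_{(0,0)}$ (constant $0$) and the standard and its conjugate $V_{(1,0)},V_{(0,1)}$ (next smallest) to survive below the threshold $12$, with $V_{(1,1)}=\su(3)^\C$ sitting exactly at $12$.

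Next I would determine the weights of $\m^\C$ as an $H=T^2$-representation — these are $\pm\theta_1,\pm\theta_2,\pm(\theta_1+\theta_2)$ in the usual coordinates (the six roots of $\su(3)$ not in $\h$) — and hence the weights of $\Lambda^{1,1}\m=\m^+\wedge\m^-$. Removing the $\omega$-direction gives the five nonzero weights of $\Lambda^{1,1}_0\m$ plus some multiplicity of the zero weight; the key point is to count the multiplicity of the trivial $H$-summand in $\Lambda^{1,1}_0\m$. Since $\m^+$ consists of three root spaces with distinct weights and $\m^-$ of their negatives, $\m^+\otimes\m^-$ contains the zero weight with multiplicity $3$, and after removing $\omega$ we are left with multiplicity $2$. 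This immediately shows $\dim\Hom_H(\C,\Lambda^{1,1}_0\m)=2$. Then I would check that none of $V_{(1,0)}$, $V_{(0,1)}$ restricted to $H$ shares a weight with $\Lambda^{1,1}_0\m$: the weights of $V_{(1,0)}=\C^3$ restricted to $T^2$ are the three weights of the standard representation, which are $\tfrac{1}{3}$-integral combinations distinct from the (integral) root weights appearing in $\Lambda^{1,1}_0\m$, so $\Hom_H(V_{(1,0)},\Lambda^{1,1}_0\m)=0$, and likewise for $V_{(0,1)}$. This forces $V_\gamma=\C$ and completes the proof.

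The main obstacle will be purely bookkeeping: getting the Casimir normalization exactly right so that the threshold $12$ lands where I claim (in particular verifying $\Cas^G_{(1,1)}=12$ and $\Cas^G_{(1,0)}=\Cas^G_{(0,1)}<12$ while the next representations exceed $12$), and correctly identifying which $H$-weights occur in $\Lambda^{1,1}_0\m$ versus in the small $G$-representations. This is where an off-by-a-scalar error would be fatal, so I would cross-check against the stated values $\Cas^G_\gamma$ from \cite{hermitianlaplace} and against the fact — already used in the paper — that $\varepsilon(g)\neq0$ on $F_{1,2}$ forces a nontrivial eigenspace somewhere, consistent with the trivial representation contributing at Casimir value $0$ (harmonic forms) and the representation $V_{(1,1)}$ at value $12$ contributing the genuine infinitesimal Einstein deformations via the $E(12)$ piece of Lemma~\ref{epsilon}. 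Once the tables are in place, comparing summands is immediate and the lemma follows just as in the two preceding cases.
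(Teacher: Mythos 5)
Your proposal is correct and follows the paper's overall template (Casimir threshold plus comparison of $H$-decompositions), but it replaces the paper's two key inputs by direct weight computations, which makes it more self-contained. For the multiplicity of the trivial representation, the paper argues geometrically: $G$-invariant elements of $\Omega^{1,1}_0$ are $\bar\nabla$-parallel, hence coclosed, hence harmonic (using $\Delta=\bar\Delta$ on coclosed primitive $(1,1)$-forms), and by Verbitsky's theorem all harmonic $2$-forms are primitive $(1,1)$, so the multiplicity equals $b_2(F_{1,2})=2$ by Frobenius reciprocity. You instead use that $H=T^2$ is a maximal torus, so $\Hom_H(\C,\Lambda^{1,1}_0\m)$ is just the zero-weight space: it has multiplicity $3$ in $\m^+\otimes\m^-$ (the three roots spanning $\m^+$ paired with their negatives) and drops to $2$ after removing $\omega$. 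This is shorter and avoids Verbitsky; the paper's detour buys the additional fact, needed later anyway, that these invariant forms are coclosed. For the vanishing of $\Hom_H(V_{(1,0)},\Lambda^{1,1}_0\m)$ and $\Hom_H(V_{(0,1)},\Lambda^{1,1}_0\m)$ the paper simply cites the weight analysis of \cite[Sec.~5.6]{hermitianlaplace}, while you compare weights directly; your ``$\tfrac{1}{3}$-integral'' remark should be made precise, most cleanly as follows: the center $\Z_3\subset H$ acts trivially on $\m\subset\su(3)$ and hence on $\Lambda^{1,1}_0\m$, but nontrivially on $V_{(1,0)}=\C^3$ and its conjugate, so no nonzero $H$-equivariant map can exist (equivalently, the weights of $\C^3$ lie outside the root lattice, whereas all weights of $\Lambda^{1,1}_0\m$ lie in it).

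One warning on the bookkeeping you flag yourself: the provisional formula $\Cas^G_{(k,l)}=\tfrac{2}{3}(k^2+l^2+kl+3k+3l)$ is off by a factor of $2$ for the normalization $-\tfrac{1}{12}B_\g$, under which the adjoint representation $V_{(1,1)}$ must sit exactly at $2\Einstein=12$. With your constant as written, $V_{(1,1)}$, $V_{(2,0)}$, $V_{(2,1)}$, etc.\ would all fall below the threshold and (since $\dim\Hom_H(V_{(1,1)},\Lambda^{1,1}_0\m)=4$) the lemma would fail, so the cross-check against $\Cas^G_{(1,1)}=12$ and the tabulated values that you announce is essential, not optional. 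Once the constant is anchored there, the surviving list below $12$ is exactly $V_{(0,0)},V_{(1,0)},V_{(0,1)}$ and your argument goes through.
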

\begin{proof}
It follows from \cite[(35)]{hermitianlaplace} that the Casimir eigenvalue on the irreducible representation $V_{(k,l)}$ of $G$ with respect to the inner product $-\frac{1}{12}B_\g$ is given by
\[\Cas_{(k,l)}^G=2(k(k+2)+l(l+2)).\]

\begin{table}[h]
\centering
\begin{tabular}{c|c|c}
$(k,l)$&$\dim\Hom_H(V_{(k,l)},\Lambda^{1,1}_0\m)$&$\Cas_{(k,l)}^G$\\\hline
$(0,0)$&$2$&$0$\\
$(1,0)$&$0$&$6$\\
$(0,1)$&$0$&$6$\\
$(1,1)$&$4$&$12$\\
\end{tabular}
\caption{The first few Casimir eigenvalues of $G=\SU(3)$}
\label{CasSU3}
\end{table}

By analyzing the weights of the respective representations, it has already been checked in \cite[Sec.~5.6]{hermitianlaplace} that 
\[\Hom_H(V_{(1,0)},\Lambda^{1,1}_0\m)=\Hom_H(V_{(0,1)},\Lambda^{1,1}_0\m)=0\]
and $\dim\Hom_H(V_{(1,1)},\Lambda^{1,1}_0\m)=4$.

The maximal subspace of $\Omega^{1,1}_0$ on which $G$ acts trivially is precisely the kernel of the Casimir operator. We can argue exactly as in the proof of Lemma~\ref{cp3coclosed} that the elements of this space are coclosed. Furthermore, we recall that $\Delta=\bar\Delta$ on coclosed primitive $(1,1)$-forms. Since $\bar\Delta$ acts as the Casimir operator, we are simply talking about the subspace of harmonic forms in $\Omega^{1,1}_0$. By Verbitsky's theorem \cite[Thm.~6.2]{verbitsky}, all harmonic $2$-forms lie in $\Omega^{1,1}_0$. Thus, the multiplicity of the trivial representation $\C$ in $\Omega^{1,1}_0$ is $b_2(F_{1,2})=2$. By Frobenius reciprocity, this is also the dimension of $\Hom_H(\C,\Lambda^{1,1}_0\m)$.

The results of the above discussion are listed in Table \ref{CasSU3}.
\end{proof}

We have thus shown that $0$ is the only eigenvalue smaller than $12$ in the spectrum of $\bar\Delta$ on $\Omega^{1,1}_0$. Besides, it has been proven in \cite[Sec.~6]{hermitianlaplace} that an $8$-dimensional subspace of the $32$-dimensional eigenspace to the eigenvalue $12$ consists of coclosed forms and hence yields infinitesimal Einstein deformations of $(M,g)$. We will describe these more explicitly in Section \ref{sec:rigidity}.

\begin{prop}
  On the nearly Kähler manifold $F_{1,2}$, the space of destabilizing directions for its Einstein metric $g$ consists solely of a $2$-dimensional $\Delta_L$-eigenspace to the eigenvalue $6$, arising from harmonic $2$-forms. In total, the coindex of $g$ is $2$.
\end{prop}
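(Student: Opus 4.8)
The plan is to follow the same template already used for $S^3\times S^3$ and $\CP^3$: combine the Casimir computation of Lemma~\ref{subcrit3} with the general description of solutions to system~(\ref{laplacesystem}) in Lemma~\ref{epsilon}, keeping careful track of the eigenvalue bookkeeping. By Lemma~\ref{subcrit3}, the only eigenvalue of $\bar\Delta=\Lcr$ on $\Omega^{1,1}_0$ that is strictly smaller than $12$ is $0$, and the $0$-eigenspace consists of the $G$-invariant (equivalently, harmonic) primitive $(1,1)$-forms, of which there are $b_2(F_{1,2})=2$. Since $\Delta=\bar\Delta$ on coclosed primitive $(1,1)$-forms, this tells us that $E(\mu)=0$ for all $0<\mu<12$ and $\dim E(0)=2$, while $E(12)$ is not yet relevant because the thresholds in Lemma~\ref{epsilon}\,(i) force $\mu_1<12$, $\mu_2<2$, $\mu_3<6$ whenever $\varepsilon>0$.

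Next I would run through the cases of Lemma~\ref{epsilon} to see in which of them the nonzero eigenspace $E(0)$ can contribute. Setting $\lambda=10-\varepsilon$, the relevant eigenvalues are $\mu_{1,2}=7-\varepsilon\pm\sqrt{25-4\varepsilon}$ and $\mu_3=6-\varepsilon$. Demanding $\mu_1=0$ or $\mu_2=0$ gives $(7-\varepsilon)^2=25-4\varepsilon$, i.e. $\varepsilon^2-10\varepsilon+24=0$, so $\varepsilon\in\{4,6\}$; demanding $\mu_3=0$ gives $\varepsilon=6$. At $\varepsilon=6$ we are in case~(ii), where the solution space is isomorphic to $E(2)\oplus\ker\Delta|_{\Omega^3}$; since $2$ is not in the spectrum of $\bar\Delta$ on $\Omega^{1,1}_0$ and $b_3(F_{1,2})=0$, this space is trivial. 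At $\varepsilon=4$ we are in case~(i), and then one of the eigenvalues $\mu_2=7-4-\sqrt{9}=0$ matches $E(0)$, so the solution space is isomorphic to $E(0)\oplus E(\mu_1)\oplus E(\mu_3)=E(0)$ since $\mu_1=6$ and $\mu_3=2$ are not in the spectrum. This yields a $2$-dimensional space of solutions to~(\ref{laplacesystem}), hence a $2$-dimensional $\Delta_L$-eigenspace in $\TT$ to the eigenvalue $\lambda=10-4=6$. No other value of $\varepsilon>0$ produces a nontrivial contribution, so the coindex is exactly $2$.

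The only genuine subtlety — and the step I would be most careful about — is the claim that \emph{all} harmonic $2$-forms on $F_{1,2}$ land in $E(0)$, i.e. that the harmonic $(1,1)$-forms exhaust both the $0$-eigenspace of $\bar\Delta$ on $\Omega^{1,1}_0$ and the full second cohomology. This is precisely the content invoked from Verbitsky's theorem \cite[Thm.~6.2]{verbitsky}, combined with the coincidence $\Delta=\bar\Delta$ on coclosed primitive $(1,1)$-forms and the fact that $\bar\Delta$ on a homogeneous space acts as the Casimir, so harmonicity is equivalent to $G$-invariance; all of this is already assembled in Lemma~\ref{subcrit3} and Remark~\ref{Ginv}. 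Given that, the proof is essentially an arithmetic exercise identical in structure to Proposition~\ref{cp3prop}, with the single numerical difference that $b_2$ has jumped from $1$ to $2$, so I would phrase it tersely: cite Lemma~\ref{subcrit3} for the vanishing of $E(\mu)$ for $0<\mu<12$, note that only $\varepsilon=4$ in case~(i) of Lemma~\ref{epsilon} sees $E(0)$, and conclude with $\Delta_Lh=(10-4)h=6h$ on a $2$-dimensional subspace of $\TT$, giving coindex $2$.
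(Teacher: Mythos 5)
Your proposal is correct and follows essentially the same route as the paper: invoke Lemma~\ref{subcrit3} to see that $E(\mu)=0$ for $0<\mu<12$ while $\dim E(0)=b_2=2$, then run the eigenvalue bookkeeping of Lemma~\ref{epsilon} (ruling out $\varepsilon=6$ via $E(2)=0$ and $b_3=0$, and keeping only $\varepsilon=4$ in case~(i)) to get a $2$-dimensional $\Delta_L$-eigenspace in $\TT$ to the eigenvalue $6$. The paper's own proof is just a terser version of this, deferring the arithmetic to the argument already written out for Proposition~\ref{cp3prop}.
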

\begin{proof}
One last time, we want to apply Lemma~\ref{epsilon}. By Lemma~\ref{subcrit3} and the fact that harmonic forms are coclosed, we see that for $\mu<12$, the eigenspace $E(\mu)$ is only nontrivial if $\mu=0$. With the same reasoning as in the proof of Prop.~\ref{cp3prop}, we conclude that (\ref{laplacesystem}) can be solved for $\varepsilon=4$, yielding a $2$-dimensional subspace of $\TT$ such that
\[\Delta_Lh=(10-4)h=6h\]
for all its elements $h$.
\end{proof}

\begin{bem}\label{flagcs}
The space of destabilizing directions (or of harmonic $2$-forms) can be described rather explicitly. The $3$-dimensional space
\[\Lambda^{1,1}\m^H\cong\Hom_H(V_{(0,0)},\Lambda^{1,1}\m)\]
of $H$-invariant elements of $\Lambda^{1,1}\m$ corresponds to the space $(\Omega^{1,1})^G$ of $G$-invariant $(1,1)$-forms on $F_{1,2}$ and is spanned by
\[e_{12},\quad e_{34},\quad e_{56},\]
using the notation introduced in Section \ref{sec:rigidity}. The Kähler form $\omega$ corresponding to the strict nearly Kähler structure on $F_{1,2}$ is given by
\[\hat\omega=e_{12}-e_{34}+e_{56}.\]
The $2$-dimensional space $\Lambda^{1,1}_0\m^H\cong(\Omega^{1,1}_0)^G=\ker\Delta\big|_{\Omega^2}$ responsible for the destabilizing directions is now the orthogonal complement of $\hat\omega$ in $\Lambda^{1,1}\m^H$.

The twistor space over $\CP^2$ can be identified with $F_{1,2}$ in three distinct ways, giving rise to three fibrations $F_{1,2}\to\CP^2$, which in turn induce six almost complex structures on $F_{1,2}$. Three of them are actually integrable with respective Kähler forms $\omega_1,\omega_2,\omega_3$, described by
\[\hat\omega_1=-e_{12}-e_{34}+e_{56},\quad\hat\omega_2=e_{12}+e_{34}+e_{56},\quad\hat\omega_3=e_{12}-e_{34}-e_{56},\]
while the other three coincide with the almost complex structure with Kähler form $\omega$. See \cite[Sec.~3.2.3]{morris} for a detailed description.

Thus, in light of the construction in Section \ref{sec:general} using the isomorphism
\[\S^+_0\to\Omega^{1,1}_{0,\R}:\ h\mapsto h\circ J,\]
the destabilizing directions of the nearly Kähler metric $g$ on $F_{1,2}$ can be viewed as coming from variations of $\omega=g(J\cdot,\cdot)$ in the directions of $\omega_1,\omega_2,\omega_3$ while fixing $J$.

Alternatively, consider the canonical variation, i.e. change of scale of fiber against base, on each of the three aforementioned fibrations
\[\pi_j:\ F_{1,2}=\frac{\SU(3)}{T^2}\longrightarrow\frac{\SU(3)}{\mathrm{S}(\U(2)\times\U(1))}=\CP^2,\qquad j=1,2,3.\]
In \cite[Prop.~4.4]{WW}, it is shown that these variations yield destabilizing tt-tensors. General destabilizing directions $h\in\TT$ are thus (at the base point) of the form
\[h=t_1g\big|_{\m_1}+t_2g\big|_{\m_2}+t_3g\big|_{\m_3},\qquad\m=\m_1\oplus\m_2\oplus\m_3,\]
with $t_1+t_2+t_3=0$, where each of the pairwise orthogonal subspaces $\m_j$ is the vertical tangent space with respect to $\pi_j$. Since $\pi_j$ are Riemannian submersions with totally geodesic fibers, the destabilizing directions are \emph{Killing tensors} by \cite[Ex.~7.3]{killing}, that is, they satisfy the Killing equation\footnote{This can be verified directly, using the description of $\nabla_X-\bar\nabla_X$ in Section \ref{sec:flagobstr} and the fact that $h$ is $G$-invariant and hence $\bar\nabla$-parallel.}
\[\nabla_Xh(X,X)=0\qquad\forall X\in TM.\]
\end{bem}

\section{Rigidity of $F_{1,2}$}
\label{sec:rigidity}

\subsection{The infinitesimal Einstein deformations of $F_{1,2}$}
\label{sec:flagied}

We will utilize the explicit description of the infinitesimal Einstein deformations of $F_{1,2}$ given in \cite[Sec.~6]{hermitianlaplace}. For this, it is helpful to represent $M=F_{1,2}$ as a quotient of $G=\U(3)$ by the diagonally embedded torus $H=T^3$.

Denote by $E_{ij}$ the $3\times 3$-matrix with a $1$ at position $(i,j)$ and zero entries elsewhere. Let $\{h_1,h_2,h_3,e_1,\ldots,e_6\}$ be the basis of $\g=\u(3)$ given by
\begin{align*}
h_1&=\i E_{11},&h_2&=\i E_{22},&h_3&=\i E_{33},\\
e_1&=E_{12}-E_{21},&e_2&=\i(E_{12}+E_{21}),&e_3&=E_{13}-E_{31},\\
e_4&=\i(E_{13}+E_{31}),&e_5&=E_{23}-E_{32},&e_6&=\i(E_{23}+E_{32}).
\end{align*}
Note that $\{h_1,h_2,h_3\}$ span the Lie algebra $\h\subset\g$, while the reductive complement $\m\subset\g$ is spanned by $\{e_1,\ldots,e_6\}$. We now define the inner product $\langle\cdot,\cdot\rangle$ on $\g$ (and the induced bi-invariant metric on $G$) in such a way that $(e_i,\sqrt{2}h_j)$ is an orthonormal system. One easily checks that this coincides with $-\frac{1}{12}B_{\su(3)}$ when restricted to $\su(3)\subset\g$, so we recover the same metric $g$ on $F_{1,2}$.

The space $\varepsilon(g)$ of infinitesimal Einstein deformations of $g$ is equivalent to $\su(3)$ via the following prodecure. For a fixed element $\xi\in\su(3)\subset\g$, let $\xi^\ast\in C^\infty(G,\g)$ be given by $\xi^\ast(x)=\Ad(x)\xi$. This defines smooth, real-valued functions $x_1,x_2,x_3,v_1,\ldots,v_6$ on $G$ via
\[\xi^\ast=\begin{pmatrix}
            2\i v_1&x_1+\i x_2&x_3+\i x_4\\
            -x_1+\i x_2&2\i v_2&x_5+\i x_6\\
            -x_3+\i x_4&-x_5+\i x_6&2\i v_3
           \end{pmatrix}.
\]
As before, we identify sections in a tensor bundle $E=G\times_H V$ over $M$ with $H$-equivariant functions on $G$ with values in $V$ and denote this by
\[\Gamma(E)\ni\varphi\mapsto\hat\varphi\in C^\infty(G,V)^H.\]
As seen in Remark~\ref{flagcs}, the Kähler form $\omega\in\Omega^2$ corresponds to the (constant) function
\[\hat\omega=e_{12}-e_{34}+e_{56}\in C^\infty(G,\Lambda^2\m)^H\]
(we write $e_{ij}=e_i\wedge e_j$ to shorten notation). Define a real-valued function $\hat\varphi\in C^\infty(G,\Lambda^2\m)$ by
\[\hat\varphi_\xi=v_1e_{56}-v_2e_{34}+v_3e_{12}.\]
Using the description of the Kähler form via $\hat\omega$ and the fact that $v_1+v_2+v_3=0$, it is easy to check that $\hat\varphi_\xi$ is in fact $\Lambda^{1,1}_0$-valued.

The functions $v_i\in C^\infty(G)$ are $H$-invariant since
\[v_i(x)=\langle\xi^\ast(x),h_i\rangle=\langle\Ad(x^{-1})\xi,h_i\rangle=\langle\xi,\Ad(x)h_i\rangle\]
and $\ad(h_j)h_i=[h_j,h_i]=0$, hence $dv_i(h_j)=0$. Using the commutator relations of $\u(3)$, one can check that the $2$-forms $e_{12},e_{34},e_{56}\in\Lambda^2\m$ are $H$-invariant as well. This implies that the function $\hat\varphi_\xi$ is $H$-equivariant. In total, $\hat\varphi_\xi\in C^\infty(G,\Lambda^{1,1}_{0,\R}\m)^H$ and thus $\hat\varphi_\xi$ projects to a primitive $(1,1)$-form $\varphi_\xi$ on $M$. The coclosedness of $\varphi_\xi$ has also been checked in \cite[Sec.~6]{hermitianlaplace}.

It is worth noting that in the language of harmonic analysis on homogeneous spaces, $\varphi_\xi$ is associated to the element
\[\xi\otimes F\in\su(3)^\C\otimes\Hom_{T^2}(\su(3)^\C,\Lambda^{1,1}_0\m),\]
where the Fourier coefficient $F$ is given by
\[F(X)=\langle X,h_1\rangle e_{56}-\langle X,h_2\rangle e_{34}+\langle X,h_3\rangle e_{12}.\]
It is therefore no surprise that $\bar\Delta\varphi_\xi=12\varphi_\xi$, since $12$ is the eigenvalue of the Casimir operator on $V_{(1,1)}=\su(3)^\C$ (see Table \ref{CasSU3}). The fact that each tensor $\varphi_\xi$ thus obtained is coclosed amounts to $\delta(F)=0$, where $\delta$ also denotes the prototypical differential operator
\[\delta: \Hom_{T^2}(\su(3),\Lambda^{1,1}_{0,\R}\m)\longrightarrow\Hom_{T^2}(\su(3),\m)\]
associated to the invariant differential operator $\delta: \Omega^{1,1}_{0,\R}\to\Omega^1$.

The corresponding symmetric $2$-tensor, which is the actual infinitesimal Einstein deformation, is now given as $h_\xi=-J\circ\varphi_\xi$. By composing $\hat\varphi_\xi$ with $\hat\omega_\xi$, we obtain
\[\hat h_\xi=v_3\cdot(e_1\otimes e_1+e_2\otimes e_2)+v_2\cdot(e_3\otimes e_3+e_4\otimes e_4)+v_1\cdot(e_5\otimes e_5+e_6\otimes e_6).\]
In this way, each $\xi\in\su(3)$ determines a unique element $\varphi_\xi\in\Omega^{1,1}_{0,\R}$ and hence a unique $h_\xi\in\varepsilon(g)$.

In passing, we note that the infinitesimal Einstein deformations of $F_{1,2}$ in fact coincide with the infinitesimal deformations of the nearly Kähler structure \cite[Cor.~5.12]{hermitianlaplace}. Their nonintegrability in the nearly Kähler sense was already established \cite{foscolo}. We now turn to the question whether the integrability in the Einstein sense is also obstructed.

\subsection{The obstruction against integrability}
\label{sec:flagobstr}

We first note that via the equivalence $\varepsilon(g)\cong\su(3)$ constructed in Section \ref{sec:flagied}, the integrability obstruction to second order
\[\mathcal{I}:\ \varepsilon(g)\times\varepsilon(g)\times\varepsilon(g)\to\R:\ \mathcal{I}(h_1,h_2,h_3):=\left(\Eop_g''(h_1,h_2),h_3\right)_{L^2}\]
can be viewed as a $G$-equivariant multilinear map that is symmetric in the first two entries, i.e. $\mathcal{I}\in(\Sym^2\su(3)^\ast\otimes\su(3)^\ast)^G$. Both of the spaces
\[\Sym^3\su(3)^G\subset(\Sym^2\su(3)\otimes\su(3))^G\]
turn out to be one-dimensional and hence equal -- in particular, $\mathcal{I}$ must be totally symmetric. Hence $\left(\Eop_g''(h,h),k\right)_{L^2}$ can be recovered from expressions of the type $\mathcal{I}(h,h,h)$ via polarization. Concretely,
\begin{equation}
\left(\Eop_g''(h,h),k\right)_{L^2}=\frac{1}{3}\frac{d}{dt}\big|_{t=0}\mathcal{I}(h+tk,h+tk,h+tk)\label{polarize}
\end{equation}
for $h,k\in\varepsilon(g)$.

The space $\Sym^3\su(3)^G$ is generated by the $G$-invariant cubic homogeneous polynomial $\i\det$. We therefore know that $\mathcal{I}(h_\xi,h_\xi,h_\xi)=c\cdot\i\det(\xi)$ for some $c\in\R$. Next, we proceed to show that $c\neq0$.

Introducing the notation $\alpha^\sigma(X_1,\ldots,X_r):=\alpha(X_{\sigma(1)},\ldots,X_{\sigma(r)})$ for any permutation $\sigma\in\Sl_r$ and any tensor $\alpha$ of rank $r$, we can rewrite formula (\ref{obstruction}) as
\[2\mathcal{I}(h,h,h)=\int_M2\Einstein\tr_g(h^3)\vol_g+3\left(\nabla^2h,h\otimes h\right)_{L^2}-6\left(\nabla^2h,(h\otimes h)^{(23)}\right)_{L^2}.\]
Integrating by parts and computing
\begin{align*}
\nabla^\ast(h\otimes h)&=-\sum_if_i\lrcorner\nabla_{f_i}(h\otimes h)=-\sum_if_i\lrcorner(\nabla_{f_i}h\otimes h+h\otimes\nabla_{f_i}h)\\
&=\delta h\otimes h-\sum_jf_j\otimes\nabla_{h(f_j)}h=-\nabla_{h(\cdot)}h,\\
\nabla^\ast(h\otimes h)^{(23)}&=-\sum_if_i\lrcorner(\nabla_{f_i}h\otimes h+h\otimes\nabla_{f_i}h)^{(23)}\\
&=(\delta h\otimes h)^{(12)}-\sum_j(f_j\otimes\nabla_{h(f_j)}h)^{(12)}=-(\nabla_{h(\cdot)}h)^{(12)}\\
\end{align*}
with some local orthonormal frame $(f_i)$ of $TM$, we obtain
\[\mathcal{I}(h,h,h)=\frac{1}{2}\int_M(2\Einstein I_0-3I_1+6I_2)\vol_g\]
for $h\in\varepsilon(g)$, where $I_0,I_1,I_2\in C^\infty(M)$ are defined by
\begin{align*}
I_0:=\tr_g(h^3),\quad I_1:=\langle\nabla h,\nabla_{h(\cdot)}h\rangle_g,\quad I_2:=\langle\nabla h,(\nabla_{h(\cdot)}h)^{(12)}\rangle_g.
\end{align*}

The functions $I_0,I_1,I_2$ on $M$ give rise to $H$-invariant functions $\hat I_0,\hat I_1,\hat I_2\in C^\infty(G)^H$, the first of which can already be easily computed:
\[\hat I_0=\tr(\hat h^3)=2v_1^3+2v_2^3+2v_3^3=6v_1v_2v_3,\]
using that $v_1+v_2+v_3=0$. In order to obtain the other two terms, we have to compute derivatives of $h$. Recall that the canonical Hermitian connection $\bar\nabla$ and the Levi-Civita connection $\nabla$ are related by
\[\nabla_X=\bar\nabla_X+\frac{1}{2}A_X,\qquad X\in TM,\]
where $A_X=J\circ(\nabla_XJ)$ on $TM$ and then extended as a derivation to tensors of arbitrary rank. Identifying $2$-forms with skew-symmetric endomorphisms of $TM$, we can also write $A_X=X\lrcorner\Psi^-$, where $\Psi^-\in\Omega^3$ is the imaginary part of the complex volume form of $M$, which is $G$-invariant and at the base point given by
\[\Psi^-=e_{236}-e_{146}-e_{135}-e_{245}.\]
(see also \cite[Sec.~6]{hermitianlaplace}).

The canonical horizontal distribution $\H\subset TG$ is spanned by the left-invariant vector fields $e_1,\ldots,e_6$. For any vector $X\in TM$, let $\tilde{X}\in\H$ denote its horizontal lift. Since $\bar\nabla$ is the Ambrose-Singer connection of the homogeneous space $M=G/H$, it follows from (\ref{CR}) that
\begin{align*}
\widehat{\bar\nabla_Xh}=\tilde{X}(\hat h)=&\,\tilde{X}(v_3)\cdot(e_1\otimes e_1+e_2\otimes e_2)+\tilde{X}(v_2)\cdot(e_3\otimes e_3+e_4\otimes e_4)\\
&+\tilde{X}(v_1)\cdot(e_5\otimes e_5+e_6\otimes e_6).
\end{align*}
We compute
\begin{align*}
e_1(\hat h)&=x_2\cdot(e_3\otimes e_3+e_4\otimes e_4-e_5\otimes e_5-e_6\otimes e_6),\\
e_2(\hat h)&=x_1\cdot(-e_3\otimes e_3-e_4\otimes e_4+e_5\otimes e_5+e_6\otimes e_6),\\
e_3(\hat h)&=x_4\cdot(e_1\otimes e_1+e_2\otimes e_2-e_5\otimes e_5-e_6\otimes e_6),\\
e_4(\hat h)&=x_3\cdot(-e_1\otimes e_1-e_2\otimes e_2+e_5\otimes e_5+e_6\otimes e_6),\\
e_5(\hat h)&=x_6\cdot(e_1\otimes e_1+e_2\otimes e_2-e_3\otimes e_3-e_4\otimes e_4),\\
e_6(\hat h)&=x_5\cdot(-e_1\otimes e_1-e_2\otimes e_2+e_3\otimes e_3+e_4\otimes e_4).
\end{align*}
Secondly, it follows from the $G$-invariance of $A$ that\footnote{We write $\alpha\odot\beta=\alpha\otimes\beta+\beta\otimes\alpha$ for $\alpha,\beta\in\m$.}
\begin{align*}
\widehat{A_Xh}=A_{\hat X}\hat h=&\,v_3\cdot(A_{\hat X}e_1\odot e_1+A_{\hat X}e_2\odot e_2)+v_2\cdot(A_{\hat X}e_3\odot e_3+A_{\hat X}e_4\odot e_4)\\
&+v_1\cdot(A_{\hat X}e_5\odot e_5+A_{\hat X}e_6\odot e_6).
\end{align*}
Using the above expression for $\Psi^-$, we compute
\begin{align*}
A_{e_1}\hat h&=(v_1-v_2)\cdot(e_3\odot e_5+e_4\odot e_6),\\
A_{e_2}\hat h&=(v_1-v_2)\cdot(e_4\odot e_5-e_3\odot e_6),\\
A_{e_3}\hat h&=(v_3-v_1)\cdot(e_1\odot e_5-e_2\odot e_6),\\
A_{e_4}\hat h&=(v_3-v_1)\cdot(e_1\odot e_6+e_2\odot e_5),\\
A_{e_5}\hat h&=(v_2-v_3)\cdot(e_1\odot e_3+e_2\odot e_4),\\
A_{e_6}\hat h&=(v_2-v_3)\cdot(e_1\odot e_4-e_2\odot e_3).
\end{align*}
To obtain $\nabla h$, we simply combine:
\[\hat{X}\lrcorner\widehat{\nabla h}=\widehat{\nabla_Xh}=\widehat{\bar\nabla_Xh}+\frac{1}{2}\widehat{A_Xh}=\tilde{X}(\hat h)+\frac{1}{2}A_{\hat X}\hat h.\]
The coefficients of $\widehat{\nabla h}\in C^\infty(G,\m^{\otimes 3})$ with respect to the basis $(e_i)$ are listed in Table \ref{nablahcoef}.

\begin{table}[t]
\centering
\begin{tabular}{c||c|c|c}
$i$&$\widehat{\nabla h}(e_i,e_1,\cdot)$&$\widehat{\nabla h}(e_i,e_2,\cdot)$&$\widehat{\nabla h}(e_i,e_3,\cdot)$\\\hline
$1$&$0$&$0$&$x_2e_3+\frac{v_1-v_2}{2}e_5$\\
$2$&$0$&$0$&$-x_1e_3+\frac{v_2-v_1}{2}e_6$\\
$3$&$x_4e_1+\frac{v_3-v_1}{2}e_5$&$x_4e_2+\frac{v_1-v_3}{2}e_6$&$0$\\
$4$&$-x_3e_1+\frac{v_3-v_1}{2}e_6$&$-x_3e_2+\frac{v_3-v_1}{2}e_5$&$0$\\
$5$&$x_6e_1+\frac{v_2-v_3}{2}e_3$&$x_6e_2+\frac{v_2-v_3}{2}e_4$&$-x_6e_3+\frac{v_2-v_3}{2}e_1$\\
$6$&$-x_5e_1+\frac{v_2-v_3}{2}e_4$&$-x_5e_2+\frac{v_3-v_2}{2}e_3$&$x_5e_3+\frac{v_3-v_2}{2}e_2$\\
\multicolumn{4}{c}{}\\
$i$&$\widehat{\nabla h}(e_i,e_4,\cdot)$&$\widehat{\nabla h}(e_i,e_5,\cdot)$&$\widehat{\nabla h}(e_i,e_6,\cdot)$\\\hline
$1$&$x_2e_4+\frac{v_1-v_2}{2}e_6$&$-x_2e_5+\frac{v_1-v_2}{2}e_3$&$-x_2e_6+\frac{v_1-v_2}{2}e_4$\\
$2$&$-x_1e_4+\frac{v_1-v_2}{2}e_5$&$x_1e_5+\frac{v_1-v_2}{2}e_4$&$x_1e_6+\frac{v_2-v_1}{2}e_3$\\
$3$&$0$&$-x_4e_5+\frac{v_3-v_1}{2}e_1$&$-x_4e_6+\frac{v_1-v_3}{2}e_2$\\
$4$&$0$&$x_3e_5+\frac{v_3-v_1}{2}e_2$&$x_3e_6+\frac{v_3-v_1}{2}e_1$\\
$5$&$-x_6e_4+\frac{v_2-v_3}{2}e_2$&$0$&$0$\\
$6$&$x_5e_4+\frac{v_2-v_3}{2}e_1$&$0$&$0$\\
\end{tabular}
\caption{Coefficients of $\widehat{\nabla h}$.}
\label{nablahcoef}
\end{table}

Now, we can finally tackle the terms $I_1$ and $I_2$ in the integrability obstruction. Let $(f_i)$ be a local orthonormal frame of $TM$. Then
\begin{align*}
I_1=\langle\nabla h,\nabla_{h(\cdot)}h\rangle_{T^\ast M^{\otimes 3}}&=\sum_i\langle\nabla_{f_i}h,\nabla_{h(f_i)}h\rangle_{T^\ast M^{\otimes 2}}\\
&=\sum_{i,j}h(f_i,f_j)\langle\nabla_{f_i}h,\nabla_{f_j}h\rangle_{T^\ast M^{\otimes 2}}.
\end{align*}
By the $G$-invariance of the Riemannian metric on $M$, it follows that
\begin{align*}
\hat I_1=\sum_{i,j}\widehat{h(f_i,f_j)}\langle\widehat{\nabla_{f_i}h},\widehat{\nabla_{f_j}h}\rangle_{\m^{\otimes 2}}&=\sum_{i,j}\hat h(\hat f_i, \hat f_j)\langle\hat f_i\lrcorner\widehat{\nabla h},\hat f_j\lrcorner\widehat{\nabla h}\rangle_{\m^{\otimes 2}}.
\end{align*}
Note that $(\hat f_i(x))$ forms an orthonormal basis of $\m$ at each point $x\in G$. Since the above expression is independent of the choice of orthonormal basis, we can substitute in the orthonormal basis $(e_i)$ of $\m$. Hence the above is equal to
\[\hat I_1=\sum_{i,j}\hat h(e_i,e_j)\langle e_i\lrcorner\widehat{\nabla h},e_j\lrcorner\widehat{\nabla h}\rangle_{\m^{\otimes 2}}.\]
Similarly, we have
\[\hat I_2=\sum_{i,j}\hat h(e_i,e_j)\langle e_i\lrcorner(\widehat{\nabla h})^{(12)},e_j\lrcorner\widehat{\nabla h}\rangle_{\m^{\otimes 2}}.\]
Plugging in the coefficients from Table \ref{nablahcoef}, we obtain
\begin{align*}
\hat I_1&=-18v_1v_2v_3+4(x_1^2+x_2^2)v_3+4(x_3^2+x_4^2)v_2+4(x_5^2+x_6^2)v_1,\\
\hat I_2&=9v_1v_2v_3.
\end{align*}
One can check that these functions are indeed $H$-invariant and thus project to functions $I_0,I_1,I_2$ on $M$. Recall that $\scal_g=30$, whence $\Einstein=5$. Subsuming the above results, the full integrability obstruction is given by
\begin{align*}
\mathcal{I}(h,h,h)&=\frac{1}{2}\int_M(10I_0-3I_1+6I_2)\vol=\frac{1}{\Vol(K)}\int_GI\vol,\\
I&=84v_1v_2v_3-6(x_1^2+x_2^2)v_3-6(x_3^2+x_4^2)v_2-6(x_5^2+x_6^2)v_1.\\
\end{align*}
Integrating over $G$ amounts to projecting the integrand to its $G$-invariant, i.e. constant, part. If we view $v_1,\ldots,x_6$ as linear forms on $\su(3)$, the integrand $I$
is an $H$-invariant cubic homogeneous polynomial in $\su(3)^\ast$. Recall that the inner product on $\su(3)$ is induced by $-\frac{1}{12}B$, and
\[v_i=\langle\xi^\ast,h_i\rangle,\quad x_i=\langle\xi^\ast, e_i\rangle.\]
We therefore have the relations $\langle x_i,x_j\rangle=\langle e_i,e_j\rangle=\delta_{ij}$ as well as $\langle x_i,v_j\rangle=\langle e_i,h_j\rangle=0$, while
\[\langle v_i,v_j\rangle=\langle\pr_{\su(3)}h_i,\pr_{\su(3)}h_j\rangle=\begin{cases}
                                                                        \frac{1}{3}&i=j,\\
                                                                        -\frac{1}{6}&i\neq j.
                                                                       \end{cases}\]
The generator $\i\det$ of $\Sym^3\su(3)^G$ can be written as
\begin{align*}
\i\det=&\,8v_1v_2v_3+2(x_1x_3x_5-x_1x_4x_6-x_2x_3x_6-x_2x_4x_5)\\
&-2(x_1^2+x_2^2)v_3-2(x_3^2+x_4^2)v_2-2(x_5^2+x_6^2)v_1.
\end{align*}
The inner product on $\Sym^k\su(3)$ is induced by the inner product on $\su(3)$ via
\[\langle a_1\cdots a_k,b_1\cdots b_k\rangle=\sum_{\sigma\in\Sl_k}\prod_{i=1}^k\langle a_i,b_{\sigma(i)}\rangle\quad\text{ for }a_1,\ldots,a_k,b_1,\ldots,b_k\in\su(3).\]
We therefore see that
\begin{align*}
\langle I,\i\det\rangle_{\Sym^3\su(3)}=&\,84\cdot3\cdot|v_1v_2v_3|^2_{\Sym^3\su(3)}\\
&+6\cdot2\cdot(|x_1^2v_3|^2_{\Sym^3\su(3)}+\ldots+|x_6^2v_1|^2_{\Sym^3\su(3)})\\
=&\,672\cdot\frac{1}{18}+12\cdot6\cdot\frac{2}{3}=\frac{256}{3}\neq0
\end{align*}
and hence $\mathcal{I}(h,h,h)=c\cdot\i\det(h)$ for some $c\neq0$.

Suppose now that $\det(\xi)=0$ for some nonzero $\xi\in\su(3)$. By equation (\ref{polarize}), $\Eop_g''(h_\xi,h_\xi)$ is orthogonal to $\varepsilon(g)$ if and only if $\xi$ is a critical point of $\det$, i.e. if
\[\frac{d}{dt}\big|_{t=0}\det(\xi+t\eta)=0\]
for all $\eta\in\su(3)$. Equivalently, the rank of the complex $3\times3$-matrix $\xi$ is equal to $1$. However, no such element of $\su(3)$ exists, since nonzero skew-hermitian matrices have rank at least $2$.

This concludes the proof of Theorem~\ref{thm2}.

\clearpage

\end{document}